\newcommand{\E}[1]{\textbf{E} \left[#1\right]}
\newcommand{\set}[1]{\left\{#1\right\}}
\def\lbar{\overline{\lambda}}
\def\lhat{\widehat{\lambda}}
\def\cB{{\cal B}}
\def\cD{{\cal D}}
\def\cE{{\cal E}}
\def\F{\Phi}
\def\a{\alpha}
\def\d{\delta}
\def\D{\Delta}
\def\e{\varepsilon}
\def\f{\phi}
\def\G{\Gamma}
\def\k{\kappa}
\def\z{\zeta}
\def\l{\lambda}
\def\m{\mu}
\def\n{\nu}
\def\p{\pi}
\def\s{\sigma}
\def\S{\Sigma}
\def\t{\tau}
\def\om{\omega}
\def\whp{w.h.p.}
\def\wvhp{w.v.h.p.}
\newcommand\Prob[1]{{\mbox{Pr}\left\{#1\right\}}}
\newcommand{\beq}[2]{\begin{equation}\label{#1}#2\end{equation}}
\newtheorem{lemma}{Lemma}
\newtheorem{theorem}{Theorem}
\newcommand{\brac}[1]{\left( #1\right)}
\newcommand{\bfrac}[2]{\brac{\frac{#1}{#2}}}
\newtheorem{conjecture}{Conjecture}
\newcommand{\rdup}[1]{\lceil #1 \rceil}
\newcommand{\rdown}[1]{{\mbox{$ \lfloor #1 \rfloor $}}}
\title{On edge disjoint spanning trees in a randomly weighted complete graph}
\author{Alan Frieze\thanks{Research supported in part by NSF Grant  DMS1362785. Email: alan@random.math.cmu.edu}\ \ \  Tony Johansson\thanks{Research supported in part by NSF Grant  DMS1362785. Email: alan@random.math.cmu.edu}\\ Department of Mathematical Sciences\\Carnegie Mellon University\\Pittsburgh PA 15213\\U.S.A.}
\begin{document}
\maketitle

\begin{abstract}
Assume that the edges of the complete graph $K_n$ are given independent uniform $[0,1]$ edges weights. We consider the expected minimum total weight $\mu_k$ of $k\geq 2$ edge disjoint spanning trees. When $k$ is large we show that $\mu_k\approx k^2$. Most of the paper is concerned with the case $k=2$. We show that $\m_2$ tends to an explicitly defined constant and that $\mu_2\approx 4.1704288\ldots$. 
\end{abstract}
\section{Introduction}
This paper can be considered to be a contribution to the following general problem. We are given a combinatorial optimization problem where the weights of variables are random. What can be said about the random variable equal to the minimum objective value in this model. The most studied examples of this problem are those of (i) Minimum Spanning Trees e.g. Frieze \cite{f85}, (ii) Shortest Paths e.g. Janson \cite{J99}, (iii) Minimum Cost Assignment e.g. Aldous \cite{A92}, \cite{A01}, Linusson and W\"astlund  \cite{LW04} and Nair,  Prabhakar and Sharma \cite{NPS05}, W\"astlund \cite{W09} and (iv) the Travelling Salesperson Problem e.g. Karp \cite{K1}, Frieze \cite{f04} and W\"astlund \cite{W10}.

The minimum spanning tree problem is a special case of the problem of finding a minimum weight basis in an element weighted matroid. Extending the result of \cite{f85} has proved to be difficult for other matroids. We are aware of a general result due to Kordecki and Lyczkowska-Han\'ckowiak \cite{KL} that expresses the expected minimum value of an integral using the Tutte Polynomial. The formulae obtained, although exact, are somewhat difficult to penetrate. In this paper we consider the union of $k$ cycle matroids. We have a fairly simple analysis for $k\to\infty$ and a rather difficult analysis for $k=2$. 

Given a connected simple graph $G=(V,E)$ with edge lengths ${\bf x} = (x_e:e \in E)$ and a positive integer $k$,
let $\mbox{mst}_k(G,{\bf x})$ denote the minimum length of $k$ edge disjoint spanning trees of $G$. ($\mbox{mst}_k(G)=\infty$ if such trees do not exist.) When ${\bf X} = (X_e:e \in E)$ is a family of independent random variables, each uniformly distributed on the interval $[0,1]$, denote the expected value $\E{\mbox{mst}_k(G,{\bf X})}$ by $\mbox{mst}_k(G)$. 

As previously mentioned, the case $k=1$ has been the subject of some attention. When $G$ is the complete graph $K_n$, Frieze \cite{f85} proved that
$$\lim_{n\to\infty}\mbox{mst}_1(K_n)=\z(3)=\sum_{k=1}^\infty\frac{1}{k^3}.$$
Generalisations and refinements of this result were subsequently given in Steele \cite{st}, Frieze and McDiarmid \cite{fm89}, Janson \cite{Ja}, Penrose \cite{Pen}, Beveridge, Frieze and McDiarmid \cite{BFM}, Frieze, Ruszinko and Thoma \cite{frt} and most recently in Cooper, Frieze, Ince, Janson and Spencer \cite{CFIJS}.

In this paper we discuss the case $k\geq 2$ when $G=K_n$ and define
$$\m_k^*=\liminf_{n\to\infty}\mbox{mst}_k(K_n)\text{ and }\m_k^{**}=\limsup_{n\to\infty}\mbox{mst}_k(K_n).$$
\begin{conjecture}\label{conj1}
$\m_k^*=\m_k^{**}$ i.e. $\lim_{n\to\infty}\mbox{mst}_k(K_n)$ exists.
\end{conjecture}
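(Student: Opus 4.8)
The plan is to reduce the conjecture to a single analytic statement about the normalized rank of a union of graphic matroids on the sparse random graph $G(n,c/n)$, and then to attack that statement by the objective (local weak convergence) method.

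\textbf{Reformulation via matroid union.} By the matroid union theorem --- equivalently, the Nash--Williams--Tutte tree-packing theorem --- a family of $k$ edge-disjoint spanning trees of $K_n$ is exactly a basis of the $k$-fold union $M_k$ of the cycle matroid $M(K_n)$, and greedy applied in increasing order of weight produces the minimum-weight such basis. Writing $G_p$ for the subgraph of edges of weight at most $p$, $\kappa(\cdot)$ for the number of components, and $r_k(G_p)$ for the rank of $E(G_p)$ in $M_k$, a layer-cake computation gives, for $n\ge 2k$,
\[
\mbox{mst}_k(K_n,{\bf X})=\int_0^1\big(k(n-1)-r_k(G_p)\big)\,dp,
\qquad
\mbox{mst}_k(K_n)=\int_0^n\phi_n(c)\,dc,\quad \phi_n(c):=\frac1n\,\Exp\left[k(n-1)-r_k(G(n,c/n))\right].
\]
The matroid-union rank formula rewrites the integrand as $k(n-1)-r_k(G_p)=\max_{F\subseteq E(G_p)}\big(k(\kappa(F)-1)-|E(G_p)\setminus F|\big)$, which for $k=1$ is simply $\kappa(G_p)-1$, recovering Frieze's formula. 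Hence it suffices to show that $\phi_n(c)$ converges pointwise in $c$ and admits an integrable dominating function, as then $\mbox{mst}_k(K_n)$ converges by dominated convergence.

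\textbf{Domination.} The co-rank $k(n-1)-r_k(G_p)$ is additive over the components of $G_p$ (the matroid $M_k$ is the direct sum of the $M_k$'s of the components), and for a component $C$ on $s$ vertices one has $0\le k(s-1)-r_k(C)\le (k-1)(s-1)$, with value $0$ as soon as $C$ is $k$-tree-connected (e.g.\ $2k$-edge-connected). Splitting $[0,\infty)=[0,c_k]\cup(c_k,\infty)$: on the bounded range $\phi_n(c)\le k$, so bounded convergence suffices; for $c>c_k$ the co-rank is carried by the vertices lying in small or poorly-connected components, whose expected number is $O(ne^{-\Omega(c)})$, yielding an integrable bound. (For $k=1$ this is precisely the classical tail estimate behind $\int_0^\infty\Exp[\kappa(G(n,c/n))]/n\,dc\to\zeta(3)$.)

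\textbf{Pointwise convergence --- the crux, and the obstacle.} Fix $c$. Since $G(n,c/n)$ converges in the local weak sense to the Poisson$(c)$ Galton--Watson tree $T$, one expects $\phi_n(c)$ to converge to the expectation of a local functional of $T$ determined by a recursive distributional equation: assign to each oriented edge $(u\to v)$ of $T$ a value in $\{0,1,\dots,k\}$ recording how many of the $k$ trees use $uv$ in an optimal packing of the pendant subtree rooted at $v$, subject to a local optimality relation; take the appropriate (e.g.\ pointwise-minimal) fixed point of the induced distributional equation and express $\lim_n\phi_n(c)$ through it. One must prove (a) existence of the correct fixed point and (b) that the finite-$n$ optimum genuinely localizes, i.e.\ $k(n-1)-r_k(G(n,c/n))$ is asymptotically a sum of local contributions; an alternative route to existence of the limit alone is the interpolation method (Guerra--Toninelli; Bayati--Gamarnik--Tetali), treating $r_k$ as a packing optimum and interpolating between $G(n,c/n)$ and $G(n+1,c/(n+1))$. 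This step is the genuine difficulty, and the reason the conjecture is open for general $k$: there is no ready-made ``objective method for union-matroid rank'' paralleling the theory for matchings or independent sets. For $k=2$ the recursion is simple enough to be analyzed by hand --- which is what the bulk of this paper does, and which in fact pins down $\mu_2$ --- but controlling the fixed point of the distributional recursion for arbitrary $k$, and thereby excluding a $\liminf/\limsup$ gap caused by non-uniqueness, is what currently blocks a proof.
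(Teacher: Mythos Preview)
The statement you are attempting to prove is a \emph{conjecture} that the paper leaves open; see the Final Remarks, where the authors explicitly list ``Is Conjecture~\ref{conj1} true?'' among the loose ends. The paper establishes the special case $k=2$ as a by-product of Theorem~\ref{th1b} (an explicit formula for $\mu_2$ in particular shows the limit exists), and Theorem~\ref{th1a} gives the asymptotics $\mu_k^*/k^2,\mu_k^{**}/k^2\to 1$ as $k\to\infty$, but neither result settles whether $\mu_k^*=\mu_k^{**}$ for each fixed $k\ge 3$. So there is no ``paper's proof'' to compare your proposal against.

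Your write-up is an honest outline rather than a proof, and you say so yourself: the pointwise convergence of $\phi_n(c)$ is labelled ``the crux, and the obstacle,'' and you conclude that controlling the fixed point of the distributional recursion ``is what currently blocks a proof.'' That is the genuine gap. The reformulation and the domination step are fine (and essentially match what the paper does for $k=2$: the integral representation is \eqref{integral}, and the tail bound is \eqref{crude}), but neither local weak convergence nor interpolation is actually carried out, and for the rank of a matroid union there is, as you note, no off-the-shelf theorem to invoke. The interpolation suggestion in particular would require showing that $n\mapsto \Exp[r_k(G(n,c/n))]$ has the right near-superadditivity structure, which is not obvious since $r_k$ is a max over subsets rather than a sum of local terms.

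One correction: your description of the paper's $k=2$ argument as ``analyzing the recursion by hand'' is not accurate. The paper does not use local weak convergence or recursive distributional equations at all. Instead it computes $\Exp[r_2(G_{n,c/n})]$ directly by (i) peeling to the $3$-core (Lemma~\ref{lem1}), (ii) handling the subcritical range $c<c_2'$ via orientability results (Lemma~\ref{subcritical}), and (iii) for $c>c_2'$ proving a first-moment expansion bound \eqref{good} in the configuration model (Lemma~\ref{supercritical}), which shows the $3$-core is nearly $2$-orientable. This is a structural/combinatorial route, quite different from the objective-method program you sketch.
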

\begin{theorem}\label{th1a}
$$\lim_{k\to \infty}\frac{\m_k^*}{k^2}=\lim_{k\to \infty}\frac{\m_k^{**}}{k^2}=1.$$
\end{theorem}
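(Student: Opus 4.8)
The plan is to prove matching upper and lower bounds on $\mbox{mst}_k(K_n)$ that each tend to $k^2$ after dividing by $k^2$ and letting $k\to\infty$ (through $n\to\infty$ first).

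For the lower bound, observe that any family of $k$ edge disjoint spanning trees uses exactly $k(n-1)$ edges, and the cheapest possible such family would use the $k(n-1)$ edges of smallest weight overall. If $X_{(1)}\le X_{(2)}\le\cdots$ are the order statistics of the $\binom n2$ i.i.d.\ uniform weights, then $\mbox{mst}_k(K_n)\ge\sum_{i=1}^{k(n-1)}X_{(i)}$. Since $X_{(i)}\approx i/\binom n2$ concentrated, the expectation of this sum is asymptotically $\frac{1}{\binom n2}\cdot\frac{(k(n-1))^2}{2}\sim k^2$, which gives $\m_k^*\ge (1-o_k(1))k^2$. (One must be a little careful passing the $o(1)$ error terms through the $\liminf$; standard concentration of order statistics handles this.)

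For the upper bound I would exhibit a specific family of $k$ edge disjoint spanning trees of cheap total weight. A clean construction: restrict attention to a ``thin'' random-like subgraph, or more simply, greedily peel off spanning trees. Order the edges by weight; run the matroid-union greedy algorithm, which repeatedly adds the cheapest edge that keeps the current edge set a forest in each of the $k$ colour classes — equivalently, build up a spanning structure that decomposes into $k$ forests. By the matroid union theorem this greedy algorithm is optimal for $\mbox{mst}_k$, so it suffices to bound its cost, but for an upper bound it is enough to analyze \emph{any} valid completion. Concretely, take the $\approx ck n$ lightest edges for a suitable constant $c$; \whp\ this graph has minimum degree $\ge 2k$ and is ``$k$-tree-connected'' (contains $k$ edge disjoint spanning trees) once $c$ is a large enough constant, by Nash--Williams/Tutte together with standard first-moment bounds on sparse cuts in $K_n$ with random weights. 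The total weight of those edges is $\approx \frac{(ckn)^2}{2\binom n2}\approx c^2k^2$, which already has the right order; to get the constant $1$ one refines the construction so that the spanning trees use edges whose weights are spread over $[0,O(k/n)]$ rather than concentrated near the top, e.g.\ take the $i$-th tree to live (roughly) on the edges of rank in an interval of length $\Theta(n)$, so that tree $i$ costs $\approx$ (its rank interval midpoint)$\cdot\frac{n}{\binom n2}$, and the total over $i=1,\dots,k$ telescopes to $(1+o_k(1))k^2$.

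The main obstacle is the upper bound, and specifically showing that one can achieve $k$ edge disjoint spanning trees using \emph{only} edges of rank $O(kn)$ — i.e., that the lightest $(1+o_k(1))k(n-1)$ edges (or close to it) already contain $k$ edge disjoint spanning trees \whp. This is a statement about $k$-tree-connectivity of a very sparse random subgraph of $K_n$; by Nash--Williams it reduces to showing every vertex subset $S$ spans enough edges / every cut has $\ge k$ crossing edges among the light ones, which one attacks via a union bound over cut sizes using $\mbox{Bin}(|S||V\setminus S|,\, p)$-type tail estimates with $p\approx \Theta(k/n)$. The delicate range is small $S$ (where one instead uses a minimum-degree argument) versus the bulk, and getting the leading constant down to exactly $1$ (rather than just $O(1)$) is where the real work lies; this is presumably why the authors single out $k\to\infty$ as the ``fairly simple'' regime while still needing a genuine argument.
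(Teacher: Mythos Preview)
Your lower bound is correct and matches the paper's: the sum of the $k(n-1)$ smallest order statistics has expectation in $[k^2(1-n^{-1}),k^2]$.

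Your upper bound has a genuine gap. The central claim you isolate --- that the lightest $(1+o_k(1))k(n-1)$ edges \whp\ contain $k$ edge-disjoint spanning trees --- is simply false. In $G_{n,m}$ with $m=(1+\e)k(n-1)$ the degree of a fixed vertex is approximately Poisson with mean $2(1+\e)k$, so it has degree $<k$ with probability $e^{-\Theta(k)}$, and \whp\ there are $\Theta(ne^{-\Theta(k)})$ such vertices. Any vertex of degree $<k$ blocks the existence of $k$ edge-disjoint spanning trees, so your cut union bound must fail, and it fails exactly where you flag the ``delicate range'': for singleton $S$ the minimum-degree argument you invoke cannot work because the minimum degree is genuinely too small. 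The alternative sketch (putting the $i$th tree on edges of rank in a prescribed interval) is not a construction either, since nothing forces such an edge set to be a spanning tree.

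The paper's resolution is a two-stage construction. Take the $k_0(n-1)$ lightest edges with $k_0=k+k^{2/3}$; this graph behaves like $G_{n,2k_0/n}$. By the core threshold asymptotics \eqref{w3} one has $c_{2k}<2k_0$ for large $k$, so a $2k$-core exists and misses only an $e^{-\Omega(k^{1/3})}$-fraction of the vertices. By {\L}uczak the $2k$-core is $2k$-edge-connected, and then Nash--Williams gives $k$ edge-disjoint spanning trees \emph{inside the core}. The few vertices outside the core are attached individually, each via its $k$ cheapest edges into the core; an order-statistic tail bound (Lemma~\ref{lemFG}) controls the total attachment cost by $o(k^2)$. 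Altogether $\mbox{mst}_k(K_n)\le k_0^2+o(k^2)\le k^2+3k^{5/3}$. The idea you are missing is precisely this passage to the $2k$-core to sidestep the low-degree vertices.
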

\begin{theorem}\label{th1b}
With $f_k$ and $c_2'\approx 3.59$ and $\l_2' \approx 2.688$ as defined in \eqref{fkdef}, \eqref{ckdash}, \eqref{l2pdef},
\begin{multline*}
\m_2
=2c_2'-\frac{(c_2')^2}{4}+  \int_{\l=\l_2'}^\infty \left(2 - \frac{\l e^\l}{2f_2(\l)} + \frac{\l f_2(\l)}{2e^\l} - 2\frac{f_3(\l)}{e^\l} \right) \left(\frac{e^\l}{f_2(\l)} + \frac{\l e^\l}{f_2(\l)} - \frac{\l e^\l f_1(\l)}{f_2(\l)^2}\right) d\l\\
= 4.17042881\dots
\end{multline*}
\end{theorem}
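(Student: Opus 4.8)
The plan is to reduce the minimum-weight two-spanning-tree problem on $K_n$ to an analysis of a limiting random structure obtained by the Poisson-weighted infinite tree / objective method, and then to compute the resulting constant by integrating over a threshold parameter. First I would recall the combinatorial characterisation: a set of edges contains two edge-disjoint spanning trees iff, by Nash--Williams/Tutte, every partition of $V$ into $r$ parts is crossed by at least $2(r-1)$ edges; more usefully, the minimum-weight $2$-forest basis of the union of two copies of the cycle matroid is found greedily, adding edges in increasing weight order and keeping an edge iff it is independent in this union matroid (i.e. iff it does not create a ``$2$-cycle'' in the current subgraph in the matroid-union sense). So $\mbox{mst}_2(K_n,{\bf X}) = \int_0^1 (\text{rank deficiency of the union matroid restricted to edges of weight}\le x)\,$-type expression; equivalently, following the Frieze \cite{f85} template, $\mbox{mst}_2 = \int_0^\infty \Prob{\text{edge } e \text{ of weight } \lambda/n \text{ is ``useful''}}\,d\lambda$ after rescaling weights by $n$.

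Next I would pass to the limit. As in \cite{f85} and the objective-method papers of Aldous, the local structure of $K_n$ with the cheapest edges exposed converges to a Galton--Watson / Poisson-weighted infinite tree with $\mathrm{Poisson}(\lambda)$ offspring when we look at edges of rescaled weight $\le\lambda$. The event that a fresh edge of weight $\lambda/n$ is accepted into the minimum $2$-basis is, in the limit, an event about two independent such branching trees rooted at the endpoints; the relevant probabilities should be expressible through the survival/extinction generating functions of these trees. This is where the functions $f_1,f_2,f_3$ and the constant $\lambda_2'$ (defined in \eqref{fkdef}, \eqref{l2pdef}) enter: $f_k(\lambda)$ presumably encodes the probability that a Poisson$(\lambda)$ branching tree has a given ``type'' with respect to being coverable by $k$ nested forests, and $\lambda_2'$ is the critical value below which (in the limit) essentially all cheap edges are used and above which a nontrivial fraction start to be rejected. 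Below $\lambda_2'$ one gets the clean polynomial contribution $2c_2' - (c_2')^2/4$ (the ``every edge up to the threshold is taken'' regime, where $c_2'$ is the expected scaled weight absorbed), and above it one integrates the acceptance probability, which after differentiating the relevant generating-function identities produces exactly the integrand displayed.

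The key technical steps, in order, are: (1) establish the greedy/matroid-union characterisation and the integral formula for $\mbox{mst}_2(K_n,{\bf X})$; (2) prove convergence of the local weighted structure to the Poisson-weighted branching process and justify interchange of limit and integral (domination / concentration, using that $\mbox{mst}_2$ is tightly concentrated — an Azuma or Talagrand argument on the edge weights — to upgrade convergence in expectation); (3) derive the fixed-point equations defining $f_1,f_2,f_3$ and verify $\lambda_2'$ is the correct phase-transition point (the point where the two-tree property becomes ``locally tight''); (4) express the limiting acceptance probability of a weight-$\lambda$ edge in terms of these functions and assemble the integral; (5) evaluate numerically to get $4.17042881\ldots$. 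The main obstacle I expect is step (2)–(3): controlling the passage to the limit for a \emph{global} property (spanning trees) via a \emph{local} calculation requires showing that the obstruction to having two edge-disjoint spanning trees is, with high probability, local — i.e. that long-range dependencies (large dense ``cores'' that are the bottleneck for the Nash--Williams condition) do not contribute to the limiting constant — and then correctly bookkeeping the several ``types'' of branching trees so that the generating-function identities close up and differentiate to the stated integrand. Getting the constant $c_2'\approx 3.59$ and $\lambda_2'\approx 2.688$ out of the fixed-point system, and checking the algebra that turns the derivative of the acceptance probability into the product form in the displayed integral, is the delicate bookkeeping that carries the bulk of the work.
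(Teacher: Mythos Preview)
Your step (1) --- the matroid-union rank integral
\[
\mbox{mst}_2(K_n)=\int_0^1\bigl(2n-2-\E{r_2(G_p)}\bigr)\,dp
\]
--- is exactly what the paper uses. From step (2) onward, however, you diverge from the paper and your proposed route has a real gap.

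The paper does \emph{not} use the objective method or local weak convergence. The functions $f_k(\l)=\sum_{j\ge k}\l^j/j!$ are not branching-process survival probabilities or fixed points; they are the truncated exponentials that appear in the Pittel--Spencer--Wormald description of the $k$-core. With $p=c/n$ and $\l$ the larger root of $c=\l e^{\l}/f_2(\l)$, the $3$-core has $\sim n f_3(\l)/e^{\l}$ vertices and $\sim n\l f_2(\l)/(2e^{\l})$ edges. The threshold $c_2'$ is where the $3$-core first reaches average degree exactly $4$, and $\l_2'=\l(c_2')$.

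The paper's decomposition is the peeling identity $r_2(G)=|E(G)\setminus E(C_3)|+r_2(C_3)$. For $c<c_2'$ the $3$-core is sparse enough (average degree $<4$) that an orientation argument gives $r_2(G_p)=|E(G_p)|-o(n)\approx cn/2$; integrating $2-c/2$ over $[0,c_2']$ yields the $2c_2'-(c_2')^2/4$ term. For $c>c_2'$ one needs $r_2(C_3)=2|V(C_3)|-o(n)$, i.e.\ the $3$-core nearly contains two edge-disjoint spanning trees. Via a max-flow/min-cut argument this reduces to the expansion condition $i(S)\ge(2-\e)|S|$ for all $S\subseteq V(C_3)$, and verifying that condition is a long first-moment calculation in the configuration model --- this is the actual hard work and fills Section~\ref{sec:super} and Appendix~A.

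Your own worry, that ``large dense cores that are the bottleneck for the Nash--Williams condition'' might obstruct a local argument, is exactly on target: above threshold the rank deficiency is governed by the $3$-core, a global object, and the paper attacks it head-on rather than reducing to anything local. You have not proposed a mechanism by which the objective method would detect the $3$-core phase transition and its edge density, and that is the missing idea. Finally, note that the displayed integrand is not an edge-acceptance probability from tree recursions: the first factor is literally $2-\tfrac{1}{n}\bigl(|E(G_p)|-|E(C_3)|+2|V(C_3)|\bigr)$ rewritten via the core formulas, and the second factor is simply $dc/d\l$ from the change of variables $c=\l e^{\l}/f_2(\l)$.
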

There appears to be no clear connetion between $\mu_2$ and the $\z$ function.

Before proceeding to the proofs of Theorems \ref{th1a} and \ref{th1b} we note some properties of the $\k$-core of a random graph.
\section{The $\k$-core}\label{kcore}
The functions
\begin{equation}\label{fkdef}
f_i(\l) = \sum_{j=i}^{\infty} \frac{\l^j}{j!}, \quad i = 0, 1, 2, \dots,
\end{equation}
figure prominently in our calculations. For $\l > 0$ define 
$$g_i(\l) = \frac{\l f_{2-i}(\l)}{f_{3-i}(\l)}, \quad g_i(0) = 3-i, \quad i = 0,1,2.$$ 
Properties of these functions are derived in Appendix B.

The {\em $\k$-core} $C_\k(G)$ of a graph $G$ is the largest set of vertices that induces a graph $H_\k$ such that the minimum degree $\d(H_\k)\geq \k$. Pittel, Spencer and Wormald \cite{PSW} proved that there exist constants, $c_\k,\k\geq 3$ such that if $p=c/n$ and $c<c_\k$ then w.h.p. $G_{n,p}$ has no $\k$-core and that if $c>c_\k$ then w.h.p. $G_{n,p}$ has a $\k$-core of linear size. We list some facts about these cores that we will need in what follows.  

Given $\l$ let $\mbox{Po}(\l)$ be the Poisson random variable with mean $\l$ and let 
$$\p_r(\l)=\Prob{\mbox{Po}(\l)\geq r}=e^{-\l}f_r(\l).$$ 
Then
\beq{w1}{
c_\k=\inf\brac{\frac{\l}{\p_{\k-1}(\l)}:\l>0}.
}
When $c>c_\k$ define $\l_\k(c)$ by
\beq{w0}{
\l_\k(c)\text{ is the larger of the two roots to the equation }c=\frac{\l}{\p_{\k-1}(\l)}=\frac{\l e^\l}{f_{\k-1}(\l)}.
}
Then \wvhp\footnote{For the purposes of this paper, a sequence of events $\cE_n$ will be said to occur {\em with very high probability }\wvhp\ if $\Prob{\cE_n}=1-o(n^{-1})$. Similarly, $\cE_n$ will be said to occur {\em with high probability }\whp\ if $\Prob{\cE_n}=1-o(1)$.} with $\l=\l_\k(c)$ we have that
\beq{w2}{
C_\k(G_{n,p})\text{ has $\approx \p_\k(\l)n=\frac{f_\k(\l)}{e^\l}n$ vertices and $\approx\frac{\l^2}{2c}n =\frac{\l f_{\k-1}(\l)}{2e^\l}n$ edges}.
}
Furthermore, when $\k$ is large,
\beq{w3}{
c_\k=\k+(\k\log\k)^{1/2}+O(\log\k).
}
{\L}uczak \cite{Lu} proved that $C_{\k}$ is $\k$-connected \wvhp\ when $\k\geq 3$.

Next let $c_\k'$ be the threshold for the $(\k+1)$-core having average degree $2\k$. Here, see \eqref{w0} and \eqref{w2}, 
\beq{ckdash}{
c_\k' = \frac{\l e^{\l}}{f_\k(\l)}\text{ where }\frac{\l f_k(\l)}{f_{k+1}(\l)}=2\k.}
We have $c_2 \approx 3.35$ and $c_2' \approx 3.59$.

\section{Proof of Theorem \ref{th1a}: Large $k$.}
We will prove Theorem \ref{th1a} in this section. It is relatively straightforward. Theorem \ref{th1b} is more involved and occupies Section \ref{(b)}.

In this section we assume that $k=O(1)$ and large. Let $Z_k$ denote the sum of the $k(n-1)$ shortest edge lengths in $K_n$. We have that for $n\gg k$,
\begin{equation}\label{lower}
\mbox{mst}_k(K_n)\geq
\E{Z_k}=\sum_{\ell=1}^{k(n-1)}\frac{\ell}{\binom{n}{2}+1}=\frac{k(n-1)(k(n-1)+1)}{n(n-1)+2}
\in [ k^2(1-n^{-1}),k^2].
\end{equation}
This gives us the lower bound in Theorem \ref{th1a}. 

For the upper bound let $k_0=k+k^{2/3}$ and consider the random graph $H$ generated
by the $k_0(n-1)$ cheapest edges of $K_n$. The expected total edge weight $\overline{E}_H$ of $H$ is at most $k_0^2$, see \eqref{lower}. 

$H$ is distributed as $G_{n,k_0n}$. This is sufficiently close in distribution to $G_{n,p},p=2k_0/n$ that we can apply the results of Section \ref{kcore} without further comment. It follows from \eqref{w3} that $c_{2k}<2k_0$. Putting $\l_0=\l_{2k}(2k_0)$ we see from \eqref{w2} that w.v.h.p. $H$ has a $2k$-core $C_{2k}$ with $\sim n\Prob{\mbox{Po}(\l_0)\geq 2k}$ vertices. It follows from \eqref{w0} that $\l_0=2k_0\p_{2k-1}(2k_0)\leq 2k_0$ and since $\p_{2k-1}(\l)$ increases with $\l$ and $\p_{2k-1}(2k+k^{2/3})=\Prob{\mbox{Po}(2k+k^{2/3})\geq 2k-1}\geq 1-e^{-c_1k^{1/3}}$ for some constant $c_1>0$ we see that $\frac{2k+k^{2/3}}{\p_{2k-1}(2k+k^{2/3})}\leq 2k_0$ and so $\l_0\geq 2k+k^{2/3}$.

A theorem of Nash-Williams \cite{NW} states that a $2k$-edge connected graph contains $k$ edge-disjoint spanning trees. Applying the result of {\L}uczak \cite{Lu} we
see that \wvhp\ $C_{2k}$ contains $k$ edge disjoint spanning trees
$T_1,T_2,\ldots,T_k$. It remains to argue that we can cheaply augment
these trees to spanning trees of $K_n$. Since $|C_{2k}|\sim
n\Prob{\mbox{Po}(\l)\geq 2k}$ \wvhp, we see that \wvhp\ $D_{2k}=[n]\setminus
{C}_{2k}$ satisfies $|D_{2k}|\leq 2n e^{-c_1k^{1/3}}$.

For each $v\in D_{2k}$ we let $S_v$ be the $k$ shortest edges from $v$ to
$C_{2k}$. We can then add $v$ as a leaf to each of the trees
$T_1,T_2,\ldots,T_k$ by using one of these edges. What is the total
weight of the edges $Y_v,\,v\in D_{2k}$? We can bound this
probabilistically by using the following lemma from Frieze and
Grimmett \cite{FG}:
\begin{lemma}\label{lemFG}
Suppose that $k_1+k_2+\cdots+k_M\leq a$, and $Y_1,Y_2,\ldots,Y_M$
are independent random variables with $Y_i$ distributed as the $k_i$th
minimum of $N$ independent uniform [0,1] random variables. If $\m>1$ then
$$\Prob{Y_1+\cdots+Y_M\geq \frac{\m a}{N+1}}\leq
e^{a(1+\ln \m-\m)}.$$
\end{lemma}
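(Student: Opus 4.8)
The plan is a routine Chernoff (exponential moment) argument; the only point needing a small trick is bounding the moment generating function of each $Y_i$, since the exact MGF of a Beta random variable is an awkward confluent hypergeometric function.

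\textbf{Step 1 (distribution and exponential moments).} The $k_i$th smallest of $N$ independent uniforms on $[0,1]$ has the $\mathrm{Beta}(k_i,N-k_i+1)$ distribution, so its $m$th moment is $\E{Y_i^m}=(k_i)_m/(N+1)_m$, where $(x)_m=x(x+1)\cdots(x+m-1)$ denotes the rising factorial. Since $(N+1)_m=\prod_{j=0}^{m-1}(N+1+j)\ge(N+1)^m$, this gives the termwise bound $\E{Y_i^m}\le(k_i)_m/(N+1)^m=\E{(G_i/(N+1))^m}$, where $G_i$ is a $\mathrm{Gamma}(k_i,1)$ random variable. As $e^{ty}=\sum_{m\ge0}t^my^m/m!$ has nonnegative coefficients for $t>0$, summing the series yields $\E{e^{tY_i}}\le\E{e^{tG_i/(N+1)}}=(1-t/(N+1))^{-k_i}$ for $0<t<N+1$; that is, $Y_i$ is dominated, as far as exponential moments go, by $(N+1)^{-1}$ times a sum of $k_i$ i.i.d. rate-$1$ exponentials.

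\textbf{Step 2 (Chernoff bound and optimization).} By independence and Markov's inequality, $\Prob{\sum_i Y_i\ge s}\le e^{-ts}\prod_i\E{e^{tY_i}}$ for $t>0$; invoking the bound of Step~1, and then $\sum_i k_i\le a$ together with $(1-t/(N+1))^{-1}\ge1$, we get $\Prob{\sum_i Y_i\ge s}\le e^{-ts}(1-t/(N+1))^{-a}$ for $0<t<N+1$. Taking $s=\mu a/(N+1)$ and writing $u=t/(N+1)\in(0,1)$, the bound becomes $\exp(-a(\mu u+\ln(1-u)))$. Maximizing $\mu u+\ln(1-u)$ over $u\in(0,1)$ gives $u=1-1/\mu$ (which lies in $(0,1)$ precisely because $\mu>1$), with maximal value $\mu-1-\ln\mu$; hence $\Prob{\sum_i Y_i\ge\mu a/(N+1)}\le e^{-a(\mu-1-\ln\mu)}=e^{a(1+\ln\mu-\mu)}$, as claimed.

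I do not expect a genuine obstacle: Markov's inequality, the independence, and the one-variable optimization are all routine, so the heart of the argument is the moment comparison $\E{Y_i^m}\le(k_i)_m/(N+1)^m$ in Step~1. A less computational alternative would be to use the representation of uniform order statistics as ratios $S_{k_i}/S_{N+1}$ of partial sums of i.i.d. exponentials, but the shared random denominator $S_{N+1}$ makes that route messier than the direct moment bound.
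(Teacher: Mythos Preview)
Your proof is correct. The paper does not actually prove this lemma; it merely quotes it from Frieze and Grimmett~\cite{FG} and uses it as a black box, so there is no in-paper argument to compare against. Your Chernoff argument with the termwise moment domination $\E{Y_i^m}=(k_i)_m/(N+1)_m\le(k_i)_m/(N+1)^m$ (reducing to the Gamma MGF) is exactly the natural route, and the optimization in $u=1-1/\mu$ is carried out correctly.
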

Let $\e=2e^{-c_1k^{1/3}}$ and $\m=10\ln 1/\e$ and let $M=k\e
n,\,N=(1-\e)n,\,a=k^2\e n$. Let $\cB_0$ be the event that
there exists a set $S$ of size $\e n$ such that the sum over $v\in S$ of the lengths of the $k$
shortest edges from $v$ to $[n]\setminus S$ exceeds $\m
a/(N+1)$. Next let $\cB$ be the event that the sum over $v\in S$ of the length of the $k$th shortest edge from $v$ to $[n]\setminus S$ exceeds $\m a/(k(N+1))$. We have $\cB_0\subseteq \cB$ and applying Lemma \ref{lemFG} we see
that
$$\Prob{\cB}\leq \binom{n}{\e n}\exp\set{k\e n(1+\ln \m-\m)}\leq
\brac{\frac{e}{\e}\cdot e^{-\mu k/2}}^{\e n}=o(n^{-1}).$$
It follows that
$$\mbox{mst}_k(K_n)\leq o(1)+k_0^2+\frac{\m a}{N+1}\leq k^2+3k^{5/3}.$$
The $o(1)$ term is a bound $kn\times o(n^{-1})$, to account for the
cases that occur with probability $o(n^{-1})$.

Combining this with \eqref{lower} we see that
$$k^2\leq \m_k\leq k^2+3k^{5/3}$$
which proves Theorem \ref{th1a}.
\section{Proof of Theorem \ref{th1b}: $k=2$.}\label{(b)}
For this case we use the fact that for any graph $G=(V,E)$, the collection of subsets
$I\subseteq E$ that can be partitioned into two edge disjoint forests
form the independent sets in a matroid. This being the matroid which
is the union of two copies of the cycle matroid of $G$. See for
example Oxley \cite{Ox} or Welsh \cite{We}. Let $r_2$ denote the rank
function of this matroid, when $G=K_n$. If $G$ is a sub-graph of $K_n$
then $r_2(G)$ is the rank of its edge-set.

We will follow the proof method in \cite{ab92}, \cite{BFM} and \cite{Ja}.
Let $F$ denote the random set of edges in the minimum weight pair of
edge disjoint spanning trees. For any $0 \leq p \leq 1$ let $G_p$ denote the graph induced by
the edges $e$ of $K_n$ which satisfy $X_e\leq p$. Note that $G_p$ is
distributed as $G_{n,p}$.

For any $0 \leq p \leq 1$,
$\sum_{e \in F} 1_{(X_e >p)}$ is the number of edges of $F$ which are not in
$G_p$, which equals $2n-2 - r_2(G_p)$.
So,
$$\mbox{mst}_2(K_n,{\bf X}) = \sum_{e \in F} X_e = \sum_{e \in F} \int_{p=0}^1 1_{(X_e >p)} dp
= \int_{p=0}^1 \sum_{e \in F} 1_{(X_e >p)} dp.$$
Hence, on taking expectations we obtain
\begin{equation}\label{integral}
\mbox{mst}_2(K_n)= \int_{p=0}^1 (2n-2-\E{r_2(G_p)})dp.
\end{equation}
It remains to estimate $\E{r_2(G_p)}$. The main contribution to the integral in \eqref{integral} comes from $p=c/n$ where $c$ is constant. Estimating $\E{r_2(G_p)}$ is easy enough for sufficiently small $c$, but it becomes more difficult for $c>c_2'$, see \eqref{ckdash}. 
When $p=\frac{c}{n}$ for $c>c_k$ we will need to be able to
estimate $\E{r_{k}(C_{k+1}(G_{n,p}))}$. We give  partial results for $k \geq 3$ and complete results for $k = 2$.
We begin with a simple observation.
\begin{lemma}\label{lem1}
Let $k\geq 2$. Let $C_{k+1}=C_{k+1}(G)$ denote the graph induced by the $(k+1)$-core of graph $G$ (it may be an empty sub-graph). Let $E_k(G)$ denote the set of edges that are
{\bf not} contained in $C_{k+1}$. Then 
\begin{equation}\label{E2}
r_k(G)=|E_k(G)|+r_k(C_{k+1}).
\end{equation}
\end{lemma}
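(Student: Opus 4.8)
The plan is to prove the two inequalities
$r_k(G)\le |E_k(G)|+r_k(C_{k+1})$ and $r_k(G)\ge |E_k(G)|+r_k(C_{k+1})$
separately, using throughout that $r_k$ is the rank function of the union of $k$ copies of the cycle matroid of $K_n$, so that for a subgraph the value $r_k$ is the rank of its edge set, and a set of edges is independent exactly when it partitions into $k$ forests.

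For the upper bound I would just invoke subadditivity of a matroid rank function. Since $E(G)$ is the disjoint union of $E_k(G)$ and $E(C_{k+1})$,
$$r_k(G)\le r_k(E_k(G))+r_k(E(C_{k+1}))\le |E_k(G)|+r_k(C_{k+1}),$$
where the last step uses $r_k(A)\le|A|$ for any edge set $A$, together with $r_k(E(C_{k+1}))=r_k(C_{k+1})$.

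For the lower bound I would exhibit an independent set of size $|E_k(G)|+r_k(C_{k+1})$ directly. Set $D=V\setminus V(C_{k+1})$. Since $C_{k+1}$ is obtained from $G$ by repeatedly deleting a vertex of current degree at most $k$, there is an ordering $v_1,\dots,v_m$ of $D$ such that each $v_i$ has at most $k$ neighbours in $W_i:=\{v_{i+1},\dots,v_m\}\cup V(C_{k+1})$. Observe that $E_k(G)$ is exactly the set of edges meeting $D$, and that every such edge has a unique $D$-endpoint $v_i$ of smallest index, its other endpoint then lying in $W_i$; so $E_k(G)$ is partitioned into the sets $B_i$ of edges ``charged'' to $i$, with $|B_i|\le k$. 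Now take a maximum independent set $I'\subseteq E(C_{k+1})$ of the union matroid restricted to $C_{k+1}$ (so $|I'|=r_k(C_{k+1})$) and fix a partition of $I'$ into forests $F_1,\dots,F_k$ of $C_{k+1}$. Processing $i=m,m-1,\dots,1$ in turn, at step $i$ all edges of $B_i$ join the new vertex $v_i$ to vertices already present; since $v_i$ is currently isolated in every $F_j$, each of the at most $k$ edges of $B_i$ can be routed to a distinct forest, and no forest acquires a cycle. After all $m$ steps the $F_j$ form a partition of $I'\cup E_k(G)$ into $k$ forests, so this set is independent and
$$r_k(G)\ge |I'\cup E_k(G)|=r_k(C_{k+1})+|E_k(G)|.$$

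The only delicate part is the lower bound, and even there the crux — that the vertices outside the $(k+1)$-core can be reinstated one at a time, each bringing back at most $k$ edges — is precisely the defining peeling property of the core; the rest is the elementary fact that attaching a new leaf to a forest keeps it a forest. The degenerate case where $C_{k+1}$ is empty needs no separate treatment: the construction then simply builds $k$ forests covering all of $E(G)$ from scratch, which is consistent with $r_k(C_{k+1})=0$.
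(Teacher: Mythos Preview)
Your proof is correct. The underlying idea --- peel off low-degree vertices and reattach them as leaves to distinct forests --- is the same as the paper's, but the packaging differs. The paper argues by induction on $|V(G)|$: it removes a single vertex $v$ of degree at most $k$, applies the inductive hypothesis to $G-v$ (which has the same $(k+1)$-core), and handles \emph{both} inequalities inductively, the upper bound coming from stripping $v$ out of a maximum forest packing of $G$. You instead dispatch the upper bound in one line via subadditivity of the matroid rank, which is cleaner, and then for the lower bound you unroll the induction into an explicit peeling order $v_1,\dots,v_m$ and build the forests in a single pass. Your version makes the structure a bit more transparent (and avoids the slightly awkward two-sided induction), while the paper's version is marginally shorter on the page; neither buys anything the other doesn't.
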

\begin{proof}
By induction on $|V(G)|$. Trivial if $|V(G)|=1$ and so assume that
$|V(G)|>1$. If $\d(G)\geq k+1$ then $G=C_{k+1}$ and there is nothing to
prove.
Otherwise, $G$ contains a vertex $v$ of degree
$d_G(v)\leq k$. Now $G-v$ has the same $(k+1)$-core as $G$. If $F_1,...,F_k$ are edge disjoint forests such that
$r_k(G)=|F_1|+ ... + |F_k|$ then by removing $v$ we see, inductively, that $|E_k(G-v)|+r_k(C_{k+1})=r_k(G-v)\geq
|F_1|+...+|F_k|-d_G(v)=r_k(G)-d_G(v)$. On the other hand
$G-v$ contains $k$ forests $F_1',...,F_k'$ such that $r_k(G-v)=|F_1'|+...+|F_k'|=|E_k(G-v)|+r_k(C_{k+1})$. We can then add $v$ as a vertex of degree one to $d_G(v)$ of the forests $F_1',...,F_k'$, implying that $r_k(G)\geq
d_G(v)+|E_k(G-v)|+r_k(C_{k+1})$. Thus, $r_k(G)=d_G(v)+|E_k(G-v)|+r_k(C_{k+1})=|E_k(G)|+r_k(C_{k+1})$.
\end{proof}

\begin{lemma}\label{subcritical}
Let $k\geq 2$. If $c_k < c < c_k'$, then w.h.p. 
\beq{w6}{
|E(G_{n,c/n})|-o(n)\leq r_{k}(G_{n,c/n}) \leq |E(G_{n,c/n})|.}
\end{lemma}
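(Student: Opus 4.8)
The plan is to use Lemma~\ref{lem1} to reduce the statement to a density estimate for the $(k+1)$-core, and then to show that core has essentially no subgraph of density above $k$. Write $\mathrm{def}_k(H)=|E(H)|-r_k(H)$ for the deficiency of $H$ in the union of $k$ cycle matroids. By \eqref{E2} we have $\mathrm{def}_k(G)=\mathrm{def}_k(C_{k+1})$ for every graph $G$, and since $r_k(G)\le|E(G)|$ always, it suffices to prove that \whp\ $\mathrm{def}_k(C_{k+1}(G_{n,c/n}))=o(n)$. If $c<c_{k+1}$ then \whp\ the $(k+1)$-core is empty and there is nothing to prove (indeed then $r_k(G)=|E(G)|$), so we may assume $c_{k+1}\le c<c_k'$. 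Then, by \eqref{w2} and \eqref{w0}, \whp\ $C_{k+1}$ has $(1+o(1))\tfrac{f_{k+1}(\l)}{e^\l}n$ vertices and $(1+o(1))\tfrac{\l f_k(\l)}{2e^\l}n$ edges, where $\l=\l_{k+1}(c)$, so that by the defining property \eqref{ckdash} of $c_k'$ and the assumption $c<c_k'$ its average degree $\tfrac{\l f_k(\l)}{f_{k+1}(\l)}$ is bounded away from $2k$, say at most $2k-2\delta$ for a constant $\delta=\delta(c)>0$. Finally, by the matroid union formula $r_k(H)=\min_{F\subseteq E(H)}\bigl(|E(H)\setminus F|+k\,\rho_H(F)\bigr)$, where $\rho_H$ is the rank function of the cycle matroid of $H$; optimising the choice of $F$ (which we may take to be a disjoint union of edge sets of vertex-induced subgraphs) yields $\mathrm{def}_k(H)=\max\sum_i\bigl(|E(H[W_i])|-k(|W_i|-1)\bigr)$, the maximum over families of pairwise disjoint vertex sets $W_i$, only positively contributing sets being relevant.

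So it remains to bound this maximum by $o(n)$ \whp, and I would treat small and large sets separately; fix a small constant $\epsilon_0>0$. \textbf{Small sets.} First, \whp\ $G_{n,c/n}$ — and hence $C_{k+1}$ — contains no vertex set $W$ with $|W|\le\epsilon_0 n$ and $|E(G[W])|\ge k(|W|-1)+1$: such a $W$ must have $|W|>2k$ (otherwise $\binom{|W|}{2}<k(|W|-1)+1$), and the expected number of them is at most $\sum_{w>2k}\binom{n}{w}\binom{\binom{w}{2}}{k(w-1)+1}(c/n)^{k(w-1)+1}$, which is $o(n^{-1})$ once $\epsilon_0$ is small enough. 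Consequently \whp\ every positively contributing set has more than $\epsilon_0 n$ vertices, so any disjoint family of them has at most $1/\epsilon_0$ members. \textbf{Large sets.} The remaining, and main, point is: for each fixed $\eta>0$, \whp\ every $W\subseteq V(C_{k+1})$ with $|W|>\epsilon_0 n$ satisfies $|E(C_{k+1}[W])|\le(k+\eta)|W|$; i.e.\ $C_{k+1}$ has no linear-sized subgraph of density above $k$. This is exactly where $c<c_k'$ is used: $C_{k+1}$ itself has density $\tfrac12\cdot\tfrac{\l f_k(\l)}{f_{k+1}(\l)}<k$, and since, after conditioning on its order and size, $C_{k+1}$ is contiguous to a uniformly random graph whose degree sequence is asymptotically $\mbox{Po}(\l)$ conditioned to be $\ge k+1$ (a light-tailed sequence), a configuration-model first moment — organised according to the total degree $\sum_{v\in W}d_v$ of $W$ rather than just $|W|$ — shows the expected number of linear-sized $W$ with $|E(C_{k+1}[W])|\ge(k+\eta)|W|$ is $e^{-\Omega(n)}$; informally, such a $W$ must consist of vertices of atypically large degree \emph{and} contain an atypically large number of internal edges, and the product of the two rates beats the entropy factor $\binom{|V(C_{k+1})|}{|W|}$. (Alternatively this should follow from concentration of the maximum subgraph density of $G_{n,c/n}$ around $\tfrac12\max_{j\ge 2}d_j(c)$, where $d_j(c)$ is the limiting average degree of the $j$-core, together with the fact — a monotonicity property of the functions $f_i$, cf.\ Appendix~B — that this maximum is attained at $j=k+1$ near $c_k'$ and so stays below $k$ for all $c<c_k'$.) I expect this large-sets claim to be the main obstacle.

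Granting the two claims we conclude as follows. Fix $\eta>0$. Then \whp, in any family $\{W_i\}$ of pairwise disjoint positively contributing sets every $W_i$ has $|W_i|>\epsilon_0 n$ (so there are at most $1/\epsilon_0$ of them) and $|E(C_{k+1}[W_i])|-k(|W_i|-1)\le\eta|W_i|+k$; summing, $\mathrm{def}_k(C_{k+1})\le\eta\sum_i|W_i|+k/\epsilon_0\le\eta n+k/\epsilon_0$. Since $\eta>0$ was arbitrary, $\mathrm{def}_k(C_{k+1})=o(n)$ \whp, and combining this with \eqref{E2} and $r_k(G)\le|E(G)|$ gives \eqref{w6}.
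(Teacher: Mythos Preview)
Your reduction via the matroid-union rank formula is correct: writing $\mathrm{def}_k(H)=|E(H)|-r_k(H)$, Lemma~\ref{lem1} gives $\mathrm{def}_k(G)=\mathrm{def}_k(C_{k+1})$, and the Edmonds/Nash-Williams formula indeed yields
\[
\mathrm{def}_k(H)=\max_{\{W_i\}\text{ disjoint}}\ \sum_i\bigl(|E(H[W_i])|-k(|W_i|-1)\bigr),
\]
with only positively contributing $W_i$ relevant. Your small-sets first moment is fine.

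The gap is exactly where you place it. The large-sets claim --- that for $c<c_k'$ no linear-sized $W\subseteq V(C_{k+1})$ has $|E(C_{k+1}[W])|>(k+\eta)|W|$ --- is not something one can assert without proof; the configuration-model first moment you sketch is the right idea but is a genuine calculation (the entropy of choosing $W$ against the tail of the truncated-Poisson degree sum is tight near the threshold). In fact the statement you need is precisely (a weakening of) the theorem of Gao, P\'erez-Gim\'enez and Sato~\cite{gps}, who show that for $c<c_k'$ \whp\ \emph{no} subgraph of $G_{n,c/n}$ has average degree exceeding $2k$. Citing~\cite{gps} closes your argument immediately, and in fact gives $\mathrm{def}_k(C_{k+1})=O(1)$ rather than $o(n)$, since every $W_i$ then contributes at most $k$.

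The paper takes a different, more constructive route and does not touch the rank formula. It quotes the $k$-orientability theorem of Cain, Sanders and Wormald~\cite{csw}: since $c<c_k'$ forces the $(k+1)$-core to have average degree at most $2k-\e$ for some $\e>0$ (immediate from~\eqref{w2} and~\eqref{ckdash}), the edges of $G_{n,p}$ can \whp\ be oriented with maximum indegree~$k$, hence partition into $k$ pseudoforests. Deleting one edge from each cycle produces $k$ genuine forests, and a short-cycle first moment together with pigeonhole on long cycles shows only $o(n)$ edges are removed. So the paper exhibits the $k$ forests explicitly, whereas you certify their existence via the rank formula; both arguments ultimately import a nontrivial external fact tied to the threshold $c_k'$, yours being~\cite{gps} and the paper's being~\cite{csw}.
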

\begin{proof}
We will show that when $c<c_k'$ we can find $k$ disjoint forests $F_1,F_2,\ldots,F_k$ contained in $C_{k+1}$ such that 
\beq{w5}{
|E(C_{k+1})|-\sum_{i=1}^k|E(F_i)|=o(n).}
This implies that $r_k(C_{k+1})\geq |E(C_{k+1})|-o(n)$ and because $r_k(C_{k+1})\leq |E(C_{k+1})|$ the lemma follows from this and Lemma \ref{lem1}.

Gao, P\'erez-Gim\'enez and Sato \cite {gps} show that  when $c<c_k'$, no subgraph of $G_{n, p}$ has average degree more than $2k$, w.h.p. Fix $\e > 0$. Cain, Sanders and Wormald \cite{csw} proved that if the average degree of the $(k+1)$-core is at most $2k - \e$, then w.h.p. the edges of $G_{n, p}$ can be oriented so that no vertex has indegree more than $k$. It is clear from \eqref{w2} that the edge density of the $(k+1)$-core increases smoothly w.h.p. and so we can apply the result of \cite{csw} for some value of $\e$.

It then follows that the edges of $G_{n,p}$ can be partitioned into $k$ sets $\F_1,\F_2,\ldots,\F_k$ where each subgraph $H_i=([n],\F_i)$ can be oriented so that each vertex has indegree at most one. We call such a graph a {\em Partial Functional Digraph} or PFD. Each component of a PFD is either a tree or contains exactly one cycle. We obtain $F_1,F_2,\ldots,F_k$ by removing one edge from each such cycle. We must show that w.h.p. we remove $o(n)$ vertices in total. Observe that if $Z$ denotes the number of edges of $G_{n,p}$ that are on cycles of length at most $\om_0=\frac13\log_cn$ then
$$\E{Z}\leq \sum_{\ell=3}^{\om_0}\ell!\binom{n}{\ell}\ell p^\ell\leq \om_0c^{\om_0}\leq n^{1/2}.$$
The Markov inequality implies that $Z\leq n^{2/3}$ w.h.p. The number of edges removed from the larger cycles to create $F_1,F_2,\ldots,F_k$ can be bounded by $kn/\om_0=o(n)$ and this proves \eqref{w5} and the lemma.
\end{proof}

\begin{lemma}\label{supercritical}
If $c > c_2'$, then w.h.p. the $3$-core of $G_{n, c/n}$ contains two edge-disjoint forests of total size $2|V(C_3)| - o(n)$. In particular, $r_2(C_3(G_{n, c/n})) = 2 |V(C_3)| - o(n)$.
\end{lemma}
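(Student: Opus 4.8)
The plan is to establish the lower bound $r_2(C_3) \geq 2|V(C_3)| - o(n)$ w.h.p.; the matching upper bound $r_2(C_3) \leq 2(|V(C_3)|-1)$ is trivial since two spanning trees on $|V(C_3)|$ vertices have $2(|V(C_3)|-1)$ edges. By the matroid union / Nash-Williams--Tutte formula, $r_2(H) = \min_{\emptyset\neq P}\left( |V(H)| - |P| + 2\big(|P|-1 + e_H(P)\big) \right)$ where the minimum is over partitions $P$ of $V(H)$ into nonempty parts and $e_H(P)$ counts edges of $H$ crossing between parts of $P$ (equivalently, $r_2(H) = \min_P \big( 2(|P|-1) + \text{(number of edges inside the parts)} \big)$ plus a correction; I will use the clean statement that two edge-disjoint spanning forests of total size $s$ exist iff for every partition $P$ the number of edges joining distinct parts is at least... — I will state the precise Nash-Williams arboricity-type inequality carefully). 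Concretely, since we want forests of total size $2|V(C_3)| - o(n)$, it suffices to show that for every partition $P = \{V_1,\dots,V_t\}$ of $V(C_3)$ with $t \geq 2$ parts, the number of edges of $C_3$ internal to some part is at most $2\sum_i(|V_i| - 1) + o(n) = 2|V(C_3)| - 2t + o(n)$; this is exactly the condition (via \cite{ab92}-style arguments) guaranteeing the existence of the two forests. In other words, I must show: w.h.p., \emph{no} union of vertex-disjoint subsets of $C_3$ spans more than twice as many edges as it has vertices, up to an $o(n)$ slack.

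The key step is therefore a first-moment/union-bound argument over \emph{all} candidate ``dense'' subgraphs. Since $c > c_2'$ by definition \eqref{ckdash} means the $3$-core has average degree less than $2\cdot 2 = 4$, I expect that w.h.p. no subgraph of $G_{n,c/n}$ has average degree exceeding $4$ — but crucially this must be used quantitatively. The argument of Gao--P\'erez-Gim\'enez--Sato \cite{gps}, already invoked in Lemma \ref{subcritical}, shows precisely that for $c < c_k'$ no subgraph has average degree more than $2k$. Here the regime is $c > c_2'$, so that result does \emph{not} apply to all of $G_{n,c/n}$, but the point is that the $3$-core $C_3$ itself, by \eqref{w2}, has $\sim \frac{f_3(\l)}{e^\l}n$ vertices and $\sim \frac{\l f_2(\l)}{2e^\l}n$ edges with $\l = \l_3(c)$, and its edge density approaches $2$ exactly as $c \to c_2'$ but may exceed it for larger $c$. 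So the first step is to pin down: we do \emph{not} claim no dense subgraph exists in all of $G_{n,c/n}$; rather we work inside $C_3$ and show that even though $C_3$ may have density slightly above $2$, it contains two forests covering all but $o(n)$ edges. I would do this by an orientation/discharging argument analogous to \cite{csw}: decompose $C_3$ minus a small set of $o(n)$ edges into two subgraphs each orientable with indegree $\leq 1$ (i.e.\ two PFDs), then delete one edge per cycle as in Lemma \ref{subcritical}. The obstruction is that $C_3$ has average degree possibly slightly above $4$ only because of the ``boundary'' between the $3$-core and the rest, not because of genuinely dense clusters; the $2$-core of $C_3$, or iterating the core operation, should reveal the truly dense part is empty.

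The main obstacle is controlling the \emph{densest} subgraph inside $C_3$. I expect the cleanest route is: (i) show w.h.p. every subgraph $S \subseteq V(C_3)$ with $|S| = \alpha n$, $\alpha$ bounded away from $0$, induces at most $(2+\d)|S|$ edges for $\d \to 0$ — this follows from a Chernoff bound plus union bound over $\binom{n}{\alpha n}$ sets, using that the expected number of edges inside $S$ in $G_{n,c/n}$ is $\frac{c}{2}\alpha^2 n$, which is less than $2\alpha n$ precisely when $\alpha < 4/c$, and one checks $\alpha < 4/c$ holds throughout because $|V(C_3)|/n \to f_3(\l)/e^\l$ and the defining relation $\frac{\l f_2(\l)}{f_3(\l)} < 4$ at $c = c_2'$ extends appropriately; (ii) handle small sets $|S| = o(n)$ separately, showing they contribute only $o(n)$ excess edges, which is where the sparse-subgraph lemmas (no small subgraph has more edges than vertices, e.g.\ via $\sum_{\ell} \binom{n}{\ell}\binom{\binom{\ell}{2}}{\ell+s}p^{\ell+s}$ estimates) come in. Then assembling (i) and (ii) with the Nash-Williams partition characterization gives, for every partition, that the internal edge count is $\leq 2|V(C_3)| - 2t + o(n)$, hence the two forests of total size $2|V(C_3)| - o(n)$ exist, and the ``in particular'' statement $r_2(C_3) = 2|V(C_3)| - o(n)$ follows by combining with the trivial upper bound. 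The delicate part that I anticipate requires the most care is matching the global density threshold $4/c$ against the actual core fraction $f_3(\l_3(c))/e^{\l_3(c)}$ for \emph{all} $c > c_2'$ — i.e.\ verifying the inequality $\frac{\l f_2(\l)}{f_3(\l)} \le 4$ fails only in a controlled way and does not create a linear-sized over-dense subgraph — which is exactly the kind of monotonicity/convexity fact about the functions $f_i$ that Appendix B is presumably set up to supply.
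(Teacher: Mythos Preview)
Your proposal has a genuine gap, and it lies precisely where you suspect it might: step (i) is not merely delicate, it is false. You claim that every linear-sized $S\subseteq V(C_3)$ induces at most $(2+\d)|S|$ edges, to be checked by a Chernoff/union bound in $G_{n,c/n}$. But for large $c$ the $3$-core is almost all of $G_{n,c/n}$: by \eqref{w2} its average degree is $\l f_2(\l)/f_3(\l)$ with $\l=\l_3(c)\to\infty$, so $|E(C_3)|/|V(C_3)|\to\infty$ as $c\to\infty$. Taking $S=V(C_3)$ already violates your density bound arbitrarily badly. Your hedge that ``$\l f_2(\l)/f_3(\l)\le 4$ fails only in a controlled way'' is exactly wrong---it fails by an unbounded amount. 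Moreover, even the Chernoff step is unavailable inside the core, where edges are far from independent.

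The Nash--Williams/matroid-union reduction you sketch does lead to the right object, but not the inequality you wrote. Working through the rank formula carefully, the condition equivalent to $r_2(C_3)\ge 2|V(C_3)|-o(n)$ is \emph{not} an upper bound on $e(S)$, but rather (after complementing) a \emph{lower} bound on the number of edges \emph{touching} each set: $i(S)=|\{e\in E(C_3):e\cap S\neq\emptyset\}|\ge (2-\e)|S|$ for all $S$. This is exactly the paper's condition \eqref{good}. Note that $i(S)$ counts both internal and crossing edges, so the condition can (and does) hold even when $e(S)\gg 2|S|$. The paper then gets the two forests from \eqref{good} via a network-flow/orientation argument (your PFD idea is right here), so the reduction step is close to what you propose. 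What you are missing is that establishing \eqref{good} for linear-sized $S$ cannot be done by naive first-moment bounds in $G_{n,p}$; the paper instead models $C_3$ via the configuration model with a truncated-Poisson degree sequence, carefully partitions a hypothetical bad $S$ according to its boundary structure ($S=S_0\cup S_1\cup S_2$ by number of neighbours in $T=\bar S$), and is then left with a multi-parameter optimisation showing the resulting probability term $f(\textbf{w})^s<1$---this is the content of Section \ref{sec:super} and the long Appendix~A calculation, and there is no shortcut around it.
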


The proof of Lemma \ref{supercritical} is postponed to Section \ref{sec:super}. We can now prove Theorem \ref{th1b}.

\section{Proof of Theorem \ref{th1b}.}
As noted in (\ref{integral}),
\begin{equation}
\mbox{mst}_2(K_n)= \int_{p=0}^1 (2n-2-\E{r_2(G_p)})dp.
\end{equation}
A crude calculation shows that if $c$ is large then
\begin{equation}\label{crude}
p\geq \frac{c}{n}\text{ implies that }\Prob{r_2(G_p)<2n-2-nAc^6e^{-c}}=o(1),
\end{equation}
for some absolute constant $A>0$.

Indeed, we know that if $p=\frac{c}{n}$ and $c$ is suficently large, then $G_p$ contains a pair of edge disjoint cycles, each of length at least $n(1-c^6e^{-c})$  with probability $1-\e_1$, where $\e_1=O(n^{-\a})$, for some absolute constant $\a>0$, see Frieze \cite{F}. If $p_1=\frac{c_1}{n}$ and $p_2=Kp_1$ then $\Prob{r_2(G_{p_2})<2n-2-nc^6e^{-c}}\leq \e_1^{p_2/p_1}=O(n^{-K\a})$ since $G_{p_2}$ can be generated by adding edges to $p_2/p_1$ independent copies of $G_{p_1}$. This confirms \eqref{crude}.

So, for large $c$,
\begin{equation}\label{cr1}
\mbox{mst}_2(K_n)= \int_{p=0}^{\frac{c}{n}} (2n-2-\E{r_2(G_p)})dp+\e_c,
\end{equation}
where 
$$0\leq \e_c\leq An\int_{p=\frac{c}{n}}^1(np)^6e^{-np}dp=A\int_{x=c}^nx^6e^{-x}dx\leq A\int_{x=c}^\infty x^6e^{-x}dx<c^7e^{-c},$$
after changing variables to $x = pn$. Doing this once more we have,
\begin{equation}
\mbox{mst}_2(K_n) = \int_{x = 0}^{c} \brac{2 - 2n^{-1} - n^{-1}\E{r_2(G_{\frac{x}{n}})}}dx+\e_c.
\end{equation}

By Lemmas \ref{lem1} and \ref{subcritical}, for $x < c_2'$ we have $n^{-1}\E{r_2(G_{\frac{x}{n}})} = n^{-1}\E{|E(G_{\frac{x}{n}})|} - \xi(x,n)= x/2 - \xi(x,n)$ where $\lim_{n\to\infty}\xi(x,n)=0$. Now $n^{-1}\E{r_2(G_{\frac{x}{n}})},n=1,2,\ldots,$ is a sequence of bounded monotone increasing continuous functions of $x$. This sequence converges pointwise to a continuous function $f$, and it actually converges uniformly to $f$. Thus we can bound $\max_{0\leq x\leq c_2'}\xi(x,n)\leq \eta(n)$ where $\lim_{n\to\infty}\eta(n)=0$.  Clearly $f(x)=x/2$ and so
$$\int_{x=0}^{c_2'}n^{-1}\E{r_2(G_{\frac{x}{n}})}dx=\int_{x=0}^{c_2'}\frac{x}{2}dx+o(1). $$
By Lemma \ref{supercritical}, for $x > c_2'$ we have $\E{r_2(C_3(G_{\frac{x}{n}}))} =\E{ 2|V(C_3)|} - o(n)$. So by Lemma \ref{lem1} $\E{r_2(G_{\frac{x}{n}})} = \E{|E(G_{\frac{x}{n}})| - |E(C_3)| + 2|V(C_3)|} - o(n)$, and
\begin{equation}
\m_2 = \int_{x = 0}^{c_2'} \brac{2 - \frac{x}{2}}dx + \int_{x=c_2'}^{c} \brac{2 - \frac{1}{n}\left(\frac{xn}{2} -\E{ |E(C_3(G_{\frac{x}{n}}))} + \E{2|V(C_3(G_{\frac{x}{n}}))|}\right)} dx + \e_c+o(1)
\end{equation}

We have from \eqref{w2} that for $p = x / n$ we have
\begin{eqnarray*}
\frac{1}{n}\E{|V(C_3)|} &=& \frac{f_3(\l)}{e^\l} + o(1) \\
\frac{1}{n}\E{|E(C_3)|} &=& \frac{\l f_2(\l)}{2e^\l} + o(1)
\end{eqnarray*}
where $\l$ is the largest solution to $\l e^{\l} / f_2(\l) = x$. 
Thus,
\begin{equation}
\mu_2 =\lim_{n \rightarrow \infty} \mbox{mst}_2(K_n) = \int_{x = 0}^{c_2'}\brac{ 2 - \frac{x}{2}} dx + \int_{x=c_2'}^c \brac{2 - \frac{x}{2} + \frac{\l f_2(\l)}{2e^\l} - 2\frac{f_3(\l)}{e^\l}}  dx+\e_c.
\end{equation}
To calculate this, note that
\begin{equation}
\frac{dx}{d\l} = \frac{e^\l}{f_2(\l)} + \frac{\l e^\l}{f_2(\l)} - \frac{\l e^\l f_1(\l)}{f_2(\l)^2}
\end{equation}
so
\begin{eqnarray}
&& \int_{x=c_2'}^c \brac{2 - \frac{x}{2} + \frac{\l f_2(\l)}{2e^\l} - 2\frac{f_3(\l)}{e^\l} } dx \nonumber\\
&=& \int_{\l(c_2')}^{\l(c)} \left(2 - \frac{\l e^\l}{2f_2(\l)} + \frac{\l f_2(\l)}{2e^\l} - 2\frac{f_3(\l)}{e^\l} \right) \left(\frac{e^\l}{f_2(\l)} + \frac{\l e^\l}{f_2(\l)} - \frac{\l e^\l f_1(\l)}{f_2(\l)^2}\right) d\l+\e_c\label{intc}
\end{eqnarray}
where $\l(x)$ is the unique solution to $\l e^{\l} / f_2(\l) = x$. 

Note that 
\begin{equation}\label{l2pdef}
\l(c_2') \approx 2.688\text{ and }\l(c)>\frac{c}{2}\text{ for large }c. 
\end{equation}
Now for large $\l$ we can bound 
$$\left(2 - \frac{\l e^\l}{2f_2(\l)} + \frac{\l f_2(\l)}{2e^\l} - 2\frac{f_3(\l)}{e^\l} \right) \left(\frac{e^\l}{f_2(\l)} + \frac{\l e^\l}{f_2(\l)} - \frac{\l e^\l f_1(\l)}{f_2(\l)^2}\right)$$
from above by $\l^3e^{-\l}$. So the range in the integral in \eqref{intc} can be extended to $\infty$ at the cost of adding an amount $\d_c$ where $0\leq \d_c\leq c^4e^{-c}$. Using the fact that we can make $\e_c,\d_c$ arbitrarily close to zero by making $c$ abritrarily large, we obtain the expression for $\m_2$ claimed in Theorem \ref{th1b}.

Attempts to transform the integral in the theorem into an explicit integral with explicit bounds have been unsuccesful. Numerical calculations give
\begin{equation}
\mu_2 \approx 4.1704288 \dots
\end{equation}
The Inverse Symbolic Calculator\footnote{https://isc.carma.newcastle.edu.au/} has yielded no symbolic representation of this number. An apparent connection to the $\z$ function lies in its representation as
\begin{equation}
\z(x) = \frac{1}{\G(x)}\int_{\l=0}^\infty \frac{\l^{x-1}}{e^\l - 1} d\l
\end{equation}
which is somewhat similar to terms of the form
\begin{equation}
\int_{\l=\l_2'}^\infty \frac{\mbox{poly}(\l)}{e^\l - 1 - \l} d\l
\end{equation}
appearing in $\mu_2$, but no real connection has been found.

\section{Proof of Lemma \ref{supercritical}.} \label{sec:super}
\subsection{More on the 3-core.}
Suppose now that $c>c_2'$ and that the 3-core $C_3$ of $G_{n,p}$ has $N=\Omega(n)$ vertices and $M$ edges. It will be distributed as a random graph uniformly chosen from the set of graphs with vertex set $[N]$ and $M$ edges and minimum degree at least three. This is an easy well known observation and follows from the fact that each such graph $H$ can be extended in the same number of ways to a graph $G$ with vertex set $[n]$ and $m$ edges and such that $H$ is the 3-core of $G$. We will for convenience now assume that $V(C_3)=[N]$. 

The degree sequence $d(v),v\in [N]$ can be generated as follows: We independently choose for each $v\in V(C_3)$ a truncated Poisson random variable with parameter $\l$ satisfying $g_0(\l) = 2 M / N$, conditioned on $d(v) \geq 3$. So for $v \in [N]$,
\begin{equation}
\Prob{d(v) = k} = \frac{\l^k}{k! f_3(\l)}, \quad k = 3, 4, 5, \dots, \quad \l = g_0^{-1}\left(\frac{2M}{N}\right)
\end{equation}
Properties of the functions $f_i, g_i$ are derived in Appendix B. In particular, the $g_i$ are strictly increasing by Lemma \ref{gconvex}, so $g_0^{-1}$ is well defined.

These independent variables are further conditioned so that the event 
\beq{cD1}{
\cD=\set{\sum_{v\in [N]}d(v)=2M}}
occurs. Now $\l$ has been chosen so that $\E{d(v)}=2M/N$ and then the local central limit theorem implies that $\Prob{\cD}=\Omega(1/N^{1/2})$, see for example Durrett \cite{Dur}. It follows that
\beq{cD}{
\Prob{\cE\mid\cD}\leq O(n^{1/2})\Prob{\cE},}
for any event $\cE$ that depends on the degree sequence of $C_3$.

In what follows we use the configuration model of Bollob\'as \cite{Bo1} to analyse $C_3$ after we have fixed its degree sequence. Thus, for each vertex $v$ we define a set $W_v$ of \emph{points} such that $|W_v| = d(v)$, and write $W = \bigcup_v W_v$. A random configuration $F$ is generated by selecting a random partition of $W$ into $M$ pairs. A pair $\set{x,y}\in F$ with $x\in W_u,y\in W_v$ yields an edge $\set{u,v}$ of the associated (multi-)graph $\G_F$.

The key properties of $F$ that we need are (i) conditional on $F$ having no loops or multiple edges, it is equally likely to be any simple graph with the given degree sequence and (ii) for the degree sequences of interest, the probability that $\G_F$ is simple will be bounded away from zero. This is because the degree sequence in \eqref{cD} has exponential tails. Thus we only need to show that $\G_F$ has certain properties w.h.p.
\subsection{Setting up the main calculation.}
Suppose now that $p=c/n$ where $c>c_2'$. We will show that w.h.p., for any fixed $\e>0$,
\begin{equation} \label{good}
i(S) = |\{e \in E(C_3) : e\cap S \neq \emptyset\}| \geq (2-\e)|S|\text{ for all }S\subseteq [N]. 
\end{equation}
Proving this is the main computational task of the paper. In principle, it is just an application of the first moment method. We compute the expected number of $S$ that violate \eqref{good} and show that tis expectation tends to zero. On the other hand, a moments glance at the expression $f(\textbf{w})$ below shows that this is unlikely to be easy and it takes more than half of the paper to verify \eqref{good}. 

It follows from \eqref{good} that 
\beq{w10}{
\text{$E(C_3)$ can be oriented so that at least $(1-\e)N$ vertices have indegree at least two.}}
To see this consider the following network flow problem. We have a source $s$ and a sink $t$ plus a vertex for each $v \in [N]$ and a vertex for each edge $e\in E(C_3)$. The directed edges are (i) $(s,v),v\in [N]$ of capacity two; (ii) $(u,e)$, where $u\in e$ of infinite capacity; (iii) $(e,t), e\in E(C_3)$ of capacity one. A $s-t$ flow decomposes into paths $s,u,e,t$ corresponding to orienting the edge $e$ into $u$. A flow thus corresponds to an orientation of $E(C_3)$. The condition \eqref{good} implies that the minimum cut in the network has capacity at least $(2-\e)N$. This implies that there is a flow of value at least $(2-\e)N$ and then the orientation claimed in \eqref{w10} exists.

Thus w.h.p. $C_3$ contains two edge-disjoint PFD's, each containing $(1-\e)N$ edges. Arguing as in the proof of Lemma \ref{subcritical}, we see that we can w.h.p. remove $o(N)$ edges from the cycles of these PFD's and obtain forests. Thus w.h.p. $C_3$ contains two edge-disjoint forests of total size at least $2(1-\e)N-o(N)$. This implies that $\E{r_2(C_3(G_{n, c/n}))} \geq 2(1-\e)N - o(N)$ and since $N = \Omega(n)$, we can have $\E{r_2(C_3(G_{n, c/n}))}=2(1-\e)N-o(n)$. Because $\e$ is arbitrary, this
implies $r_2(C_3(G_{n, c/n})) = 2N - o(n)$ whenever $c > c_2'$. 
\subsection{Proof of \eqref{good}: Small $S$.}
It will be fairly easy to show that \eqref{w10} holds w.h.p. for all $|S|\leq s_\e$ where
$$
s_\e=\bfrac{1+\e}{e^{2+\e}c}^{1/\e}n.
$$
We claim that w.h.p.
\beq{w9}{
|S|\leq s_\e\text{ implies }e(S)<(1+\e)|S|\text{ in }G_{n,p}.
}
Here $e(S)=|\set{e\in E(G_{n,p}):e\subseteq S}|.$

Indeed,
\begin{multline*}
\Prob{\exists S\text{ violating }\eqref{w9}}\leq \sum_{s=4}^{s_\e}\binom{n}{s}\binom{\binom{s}{2}}{(1+\e)s}p^{(1+\e)s}\leq\\ \sum_{s=4}^{s_\e}\bfrac{ne}{s}^s\bfrac{sec}{2(1+\e)n}^{(1+\e)s}= \sum_{s=4}^{s_\e}\brac{\bfrac{s}{n}^\e\frac{e^{2+\e}c}{2(1+\e)}}^s=o(1).
\end{multline*}
For sets $A,B$ of vertices and $v\in A$ we will let $d_B(v)$ denote the number of neighbors of $v$ in $B$. We then let $d_B(A) = \sum_{v \in A} d_B(v)$. We will drop the subscript $B$ when $B=[N]$.

Suppose then that \eqref{w9} holds and that $|S|\leq s_\e$ and $i(S)\leq (2-\e)|S|$. Then if $\bar{S}=[N]\setminus S$, we have
$$e(S)+d_{\bar{S}}(S)\leq (2-\e)|S|\text{ and }d(S)=2e(S)+d_{\bar{S}}(S)\geq 3|S|$$
which implies that $e(S)\geq (1+\e)|S|$, contradiction.
\subsection{Proof of \eqref{good}: Large $S$.}
Suppose now that $C_3$ contains an $S$ such that $i(S) < (2-\e)|S|$. Let such sets be {\em bad}. Let $S$ be a minimal bad set, and write $T = [N] \setminus S$. For any $v \in S$, we have $i(S \setminus v) \geq (2-\e)|S\setminus v|$ while $i(S) < (2-\e) |S|$. This implies $d_T(v) = i(S) - i(S \setminus v) < 2$. 

We will start with a minimal bad set and then carefully add more vertices. Consider a set $S$ such that $i(S) < 2 |S|$ and $d_T(v) \leq 2$ for all $v \in S$. If there is a $w \in T$ such that $d_T(w) \leq 2$, let $S' = S \cup \{w\}$. We have $i(S') \leq i(S) + 2 < 2|S'|$. This means we may add vertices to $S$ in this fashion to aquire a partition $[N] = S \cup T$ where $d_T(v) \leq 2$ for all $v \in S$ and $d_T(v) \geq 3$ for all $v \in T$. We further partition $S = S_0 \cup S_1 \cup S_2$ so that $d_T(v) = i$ if and only if $v \in S_i$. Denote the size of any set by its lower case equivalent, i.e. $|S_i| = s_i$ and $|T| = t$.

We now start to use the configuration model. Partition each point set into $W_v = W_v^S \cup W_v^T$, where a point is in $W_v^S$ if and only if it is matched to a point in $\cup_{u \in S} W_u$. The sizes of $W_v^S, W_v^T$ uniquely determine $\textbf{w} = (s_0, s_1, s_2, D_0, D_1, D_2, D_3, t, M)$. Here $D_i = d_S(S_i), i = 0,1,2$ and $D_3 = d_T(T)$.

\subsubsection{Estimating the probability of $\textbf{w}$.}
By construction, $D_i \geq (3 - i)s_i$ for $i = 0, 1, 2$ and $D_3 \geq 3t$. Define degree sequences $(d_i^1, \dots, d_i^{s_i})$ for $S_i, i = 0, 1, 2$ and $(d_3^1, \dots, d_3^t)$ for $T$. Furthermore, let $\widehat{d}_1^j = d_1^j - 1$, $\widehat{d}_2^j = d_2^j - 2$ and $\widehat{d}_3^j \geq 0$ be the $S$-degrees of vertices in $S_1, S_2, T$, respectively.

{\bf Dealing with $S_0$:}\\
Ignoring for the moment, that we must condition on the event $\cD$ (see \eqref{cD1}), the probability that $S_0$ has degree sequence $(d_0^1, \dots, d_0^{s_0})$, $d_0^i \geq 3$ for all $i$, is given by
\begin{equation}
\prod_{i=1}^{s_0} \frac{\l^{d_0^i}}{d_0^i ! f_3(\l)}
\end{equation}
where $\l$ is the solution to 
$$g_0(\l) = \frac{2M}{N}.$$ 
Hence, letting $[x^D] f(x)$ denote the coefficient of $x^D$ in the power series $f(x)$, the probability $\p_0(S_0,D_0)$ that $d(S_0)=D_0$ is bounded by
\begin{eqnarray}
\p_0(S_0,D_0)\leq \sum_{\stackrel{d_0^1 + \dots + d_0^{s_0} = D_0}{d_0^i \geq 3}} \prod_{i=1}^{s_0} \frac{\l^{d_0^i}}{d_0^i ! f_3(\l)} &=&  \frac{\l^{D_0}}{f_3(\l)^{s_0}} \sum_{\stackrel{d_0^1 + \dots + d_0^{s_0} = D_0}{d_0^i \geq 3}} \prod_{i=1}^{s_0} \frac{1}{d_0^i !} \nonumber\\
&=& \frac{\l^{D_0}}{f_3(\l)^{s_0}} [x^{D_0}] \left(\sum_{d_0 \geq 3} \frac{x^{d_0}}{d_0!}\right)^{s_0}\nonumber \\
&=& \frac{\l^{D_0}}{f_3(\l)^{s_0}} [x^{D_0}] f_3(x)^{s_0}\nonumber \\
&\leq& \frac{\l^{D_0}}{f_3(\l)^{s_0}} \frac{f_3(\l_0)^{s_0}}{\l_0^{D_0}}\label{expr1}
\end{eqnarray}
for all $\l_0$. Here we use the fact that for any function $f$ and any $y > 0$, $[x^{D_0}] f(x) \leq f(y) / y^{D_0}$. To minimise \eqref{expr1} we choose $\l_0$ to be the unique solution to  
\beq{l0}{
g_0(\l_0) = \frac{D_0}{s_0}. 
}
If $D_0 = 3s_0$ then $\l_0 = 0$ by Lemma \ref{gibounds}, Appendix B. In this case, since $f_3(\l_0)=\frac{\l_0^3(1+O(\l_0))}{6}$, we have
\begin{equation}\label{lambdazero}
\p_0(S_0,D_0)\leq \bfrac{\l^3}{6f_3(\l)}^{s_0}, \quad\text{when }D_0=3s_0. 
\end{equation}
{\bf Dealing with $S_1$:}\\
For each $v \in S_1$, we have $W_v = W_v^S \cup W_v^T$ where $|W_v^T| = 1$. Hence, the probability $\p_1(S_1,D_1)$ that $d(S_1)=D_1 + s_1$ is bounded by
\begin{eqnarray}
\p_1(S_1,D_1)\leq \sum_{\stackrel{\widehat{d}_1^1 + \dots + \widehat{d}_1^{s_1} = D_1}{\widehat{d}_1^i \geq 2}} \prod_{i=1}^{s_1} \binom{\widehat{d}_1^i + 1}{1} \frac{\l^{\widehat{d}_1^i + 1}}{(\widehat{d}_1^i + 1) ! f_3(\l)} \nonumber 
&=& \frac{\l^{D_1 + s_1}}{f_3(\l)^{s_1}} \sum_{\stackrel{\widehat{d}_1^1 + \dots + \widehat{d}_1^{s_1} = D_1}{\widehat{d}_1^i \geq 2}} \prod_{i=1}^{s_1}  \frac{1}{\widehat{d}_1^i ! }\\
&=& \frac{\l^{D_1 + s_1}}{f_3(\l)^{s_1}} [x^{D_1}]f_2(x)^{s_1} \nonumber\\
&\leq& \frac{\l^{D_1 + s_1}}{f_3(\l)^{s_1}} \frac{f_2(\l_1)^{s_1}}{\l_1^{D_1}}.\label{expr2}
\end{eqnarray}
We choose $\l_1$ to satisfy the equation 
\beq{l1}{
g_1(\l_1) = \frac{D_1}{s_1}.
} 
Similarly to what happens in (\ref{lambdazero}) we have $\l_1=0$ when $D_1=2s_1$ and $f_2(\l_1)=\frac{\l_1^2(1+O(\l_1))}{2}$, so
\beq{S1case}{
\p_1(S_1,D_1)\leq \bfrac{\l^3}{2f_3(\l)}^{s_1}, \quad\text{when }D_1=2s_1.}

{\bf Dealing with $S_2$:}\\
For $v \in S_2$, we choose $2$ points from $W_v$ to be in $W_v^T$ , so the probability $\p_2(S_2,D_2)$ that $d(S_2)=D_2 + 2s_2$ is bounded by
\begin{equation}\label{expr3}
\p_2(S_2,D_2)\leq \sum_{\stackrel{\widehat{d}_2^1 + \dots + \widehat{d}_2^{s_2} = D_2}{\widehat{d}_2^i \geq 1}} \prod_{i=1}^{s_2} \binom{\widehat{d}_2^i + 2}{2} \frac{\l^{\widehat{d}_2^i + 2}}{(\widehat{d}_2^i + 2)! f_3(\l)} \leq \frac{\l^{D_2 + 2s_2}}{f_3(\l)^{s_2}} \frac{f_1(\l_2)^{s_2}}{\l_2^{D_2}} 2^{-s_2}
\end{equation}
where we choose $\l_2$ to satisfy the equation
\beq{l2}{
g_2(\l_2) = \frac{D_2}{s_2}.}
Similarly to what happens in (\ref{lambdazero}) we have $\l_2=0$ when $D_2=s_2$ and $f_1(\l_2)=\l_2(1+O(\l_2))$, so
\beq{S2case}{
\p_2(S_2,D_2)\leq \bfrac{\l^3}{2f_3(\l)}^{s_2}, \quad\text{when }D_2=s_2.}
{\bf Dealing with $T$:}\\
Finally, the degree of vertex $i$ in $T$ can be written as $d_3^i = \widehat{d}_3^i + \overline{d}_3^i$ where $\widehat{d}_3^i \geq 0$ is the $S$-degree and $\overline{d}_3^i \geq 3$ is the $T$-degree. Here, with $t=|T|$, we have
$$\sum_{i=1}^t \widehat{d}_3^i = d_S(T) = s_1 + 2s_2$$ 
by the definition of $S_0, S_1, S_2$. 
So the probability $\p_3(T,D_3)$ that $d_T(T)=D_3$, given $s_1,s_2$ can be bounded by
\begin{align}
\p_3(T,D_3)&\leq\sum_{\substack{\widehat{d}_3^1+\cdots+\widehat{d}_3^t=s_1+2s_2\\\widehat{d}_3^i\geq 0}} \ \ 
\sum_{\substack{\overline{d}_3^1+\cdots+\overline{d}_3^t=D_3\\\overline{d}_3^i\geq 3}} 
\prod_{i=1}^t \binom{\widehat{d}_3^i + \overline{d}_3^i}{\widehat{d}_3^i} \frac{\l^{\widehat{d}_3^i + \overline{d}_3^i}}{(\widehat{d}_3^i + \overline{d}_3^i)! f_3(\l)}\nonumber\\
&= \frac{\l^{D_3 + s_1 + 2s_2}}{f_3(\l)^t} \sum_{\substack{\widehat{d}_3^1+\cdots+\widehat{d}_3^t=s_1+2s_2\\\widehat{d}_3^i\geq 0}} \ \ 
\sum_{\substack{\overline{d}_3^1+\cdots+\overline{d}_3^t=D_3\\\overline{d}_3^i\geq 3}} \prod_{i=1}^t \frac{1}{\widehat{d}_3^i ! \overline{d}_3^i !} \nonumber\\
&= \frac{\l^{D_3 + s_1 + 2s_2}}{f_3(\l)^t} \left([x^{D_3}] f_3(x)^t \right) \left([x^{s_1 + 2s_2}] e^x \right) \nonumber\\
&\leq\frac{\l^{D_3 + s_1 + 2s_2}}{f_3(\l)^t} \frac{f_3(\l_3)^{t}}{\l_3^{D_3}} \frac{t^{s_1 + 2s_2}}{(s_1 + 2s_2)!}, \label{expr4}
\end{align}
where we choose $\l_3$ to satisfy the equation 
\beq{l3}{
g_0(\l_3) = \frac{D_3}{t}.
}
Similarly to what happens in (\ref{lambdazero}) we have $\l_3=0$ when $D_3=3t$ and  $f_3(\l_3)=\frac{\l_3^3(1+O(\l_1))}{6}$, so
$$\p_3(T,D_3)\leq \frac{\l^{D_3 + s_1 + 2s_2}}{(6f_3(\l))^t}\frac{t^{s_1 + 2s_2}}{(s_1 + 2s_2)!}, \quad\text{when }D_3=3t.$$
\subsubsection{Putting the bounds together.}
For a fixed $\textbf{w} = (s_0, s_1, s_2, D_0, D_1, D_2, D_3, t, M)$, there are $\binom{t + s}{s_0, s_1, s_2, t}$ choices for $S_0,S_1,S_2,T$. Having chosen these sets we partition the $W_v,v\in S\cup T$ into $W_v^S\cup W_v^T$. Note that our expressions \eqref{expr1}, \eqref{expr2}, \eqref{expr3}, \eqref{expr4} account for these choices.  Given the partitions of the $W_v$'s, there are $(D_0 + D_1 + D_2)!! D_3!! (s_1 + 2s_2)!$ configurations, where $(2s)!!=(2s-1)\times(2s-3)\times\cdots\times3\times1$ is the number of ways of partitioning a set of size $2s$ into $s$ pairs. Here $(D_0 + D_1 + D_2)!!$ is the number of ways of pairing up $\bigcup_{v\in S}W_v^S$, $D_3!!$ is the number of ways of pairing up $\bigcup_{v\in T}W_v^T$ and $(s_1 + 2s_2)!$ is the number of ways of pairing points associated with $S$ to points associated with $T$. Each configuration has probability $1 / (2M)!!$. So, the total probability of all configurations whose vertex partition and degrees are described by $\textbf{w}$ can be bounded by
\begin{align*}
&\binom{t+s}{s_0, s_1, s_2, t} \frac{\l^{D_0}}{f_3(\l)^{s_0}} \frac{f_3(\l_0)^{s_0}}{\l_0^{D_0}} \frac{\l^{D_1 + s_1}}{f_3(\l)^{s_1}} \frac{f_2(\l_1)^{s_1}}{\l_1^{D_1}} \frac{\l^{D_2 + 2s_2}}{f_3(\l)^{s_2}} \frac{f_1(\l_2)^{s_2}}{\l_2^{D_2}} 2^{-s_2} \\
&\times  \frac{\l^{D_3 + s_1 + 2s_2}}{f_3(\l)^t} \frac{f_3(\l_3)^{t}}{\l_3^{D_3}} \frac{t^{s_1 + 2s_2}}{(s_1 + 2s_2)!} \frac{(D_0 + D_1 + D_2)!! D_3!! (s_1 + 2s_2)!}{(2M)!!} \\
=& \binom{t+s}{s_0, s_1, s_2, t} \frac{\l^{2M}}{f_3(\l)^{N}} \frac{f_3(\l_0)^{s_0}}{\l_0^{D_0}}  \frac{f_2(\l_1)^{s_1}}{\l_1^{D_1}}  \frac{f_1(\l_2)^{s_2}}{\l_2^{D_2}} 2^{-s_2}  \frac{f_3(\l_3)^{t}}{\l_3^{D_3}} \frac{t^{s_1 + 2s_2}}{(s_1 + 2s_2)!} \\
& \times \frac{(D_0 + D_1 + D_2)!! D_3!! (s_1 + 2s_2)!}{(2M)!!} \\
\end{align*}

Write $D_i = \D_i s$, $|S_i| = \s_i s$, $t = \tau s$, $M = \m s$ and $N = \n s$. We have $k!! \sim \sqrt{2}(k / e)^{k/2}$ as $k \rightarrow \infty$ by Stirling's formula, so the expression above, modulo an $e^{o(s)}$ factor, can be written as 
\begin{multline}\label{f!!}
f(\textbf{w})^s = \left(\frac{(\t+1)^{\t+1}}{\s_0^{\s_0}\s_1^{\s_1}(1-\s_0-\s_1)^{1 - \s_0 - \s_1} \t^{\t}}\frac{\l^{2\m}}{f_3(\l)^\n}\frac{f_3(\l_0)^{\s_0}}{\l_0^{\D_0}} \frac{f_2(\l_1)^{\s_1}}{\l_1^{\D_1}} \frac{f_1(\l_2)^{\s_2}}{\l_2^{\D_2}} \frac{f_3(\l_3)^{\t}}{\l_3^{\D_3}} \frac{(\t e)^{\s_1 + 2\s_2}}{2^{\s_2}}  \right. \\
 \left. \frac{(\D_0 + \D_1 + \D_2)^{(\D_0 + \D_1 + \D_2) / 2} \D_3^{\D_3 / 2}}{(2\m)^\m}        \right)^s
\end{multline}

We note that 
\begin{align}
\s_2 &= 1 - \s_0 - \s_1,\label{s2}\\
\D_3 &= 2\m - \D_0 - \D_1 - \D_2 - 2\s_1 - 4\s_2\nonumber\\
&= 2\m - 4 - \D_0 - \D_1 - \D_2 + 4\s_0 + 2\s_1\label{D3=}\\
\n &= 1 + \t.\label{nu=}
\end{align}
Hence $\s_2, \D_3, \n$ may be eliminated, and we can consider $\textbf{w}$ to be $(\s_0, \s_1, \D_0, \D_1, \D_2,\t, \m)$. When convenient, $\D_3$ may be used to denote $2\m - 4 - \D_0 - \D_1 - \D_2 + 4\s_0 + 2\s_1$. Define the constraint set $F$ to be all $\textbf{w}$ satisfying 
\begin{align*}
&\D_0 \geq 3\s_0, \D_1 \geq 2\s_1, \D_2 \geq 1 - \s_0 - \s_1, \D_3 \geq 3\t.\\
&\frac{\D_0 + \D_1 + \D_2}{2} + \s_1 + 2(1 - \s_0 - \s_1) < 2-\e & \mbox{ since $i(S) < (2-\e)|S|$, \qquad see (\ref{good})}.\\ 
& \s_0, \s_1 \geq 0, \s_0 + \s_1 \leq 1. \\
&0 \leq \t \leq (1 - \e) / \e  \mbox{ since $|S| \geq \e N$}.\\ 
&\m \geq (2 + \e)(1 + \t)  \mbox{ since $M \geq (2 + \e)N$.}\\
&\s_0<1, \qquad\text{ otherwise $C_3$ is not connected}.
\end{align*}
Here $\e$ is a sufficiently small positive constant such that we can (i) exclude the case of small $S$, (ii) satisfy condition \eqref{good} and (iii) have $M \geq (2 + \e)N$ since $c>c_2'$.

For a given $s$, there are $O(\mbox{poly}(s))$ choices of $\textbf{w} \in F$, and the probability that the randomly chosen configuration corresponds to a $\textbf{w} \in F$ can be bounded by
\beq{fw}{
\sum_{s \geq \e N} \sum_{\textbf{w}} O(\mbox{poly}(s)) f(\textbf{w})^s \leq \sum_s (e^{o(1)}\max_F f(\textbf{w}))^s \leq N (e^{o(1)}\max_F f(\textbf{w}))^{\e N}.}
As $N \rightarrow \infty$, it remains to show that $f(\textbf{w}) \leq 1 - \d$ for all $\textbf{w} \in F$, for some $\d = \d(\e) > 0$. At this point we remind the reader that we have so far ignored conditioning on the event $\cD$ defined in \eqref{cD1}. Inequality \eqref{cD} implies that it is sufficient to inflate the RHS of \eqref{fw} by $O(n^{1/2})$ to obtain our result.

So, let
\begin{eqnarray*}
&& f(\D_0, \D_1, \D_2, \s_0, \s_1, \t, \m) =  \\ 
&& \frac{(\t+1)^{\t+1}}{\s_0^{\s_0} \s_1^{\s_1} (1 - \s_0 - \s_1)^{1 - \s_0 - \s_1} \t^\t} \frac{\l^{2\m}}{f_3(\l)^{\t+1}} \frac{f_3(\l_0)^{\s_0}}{\l_0^{\D_0}} \frac{f_2(\l_1)^{\s_1}}{\l_1^{\D_1}} \frac{f_1(\l_2)^{1-\s_0-\s_1}}{\l_2^{\D_2}} \frac{f_3(\l_3)^\t}{\l_3^{\D_3}} \\
&\times& \frac{(e\t)^{2-2\s_0-\s_1}}{2^{1-\s_0-\s_1}} \frac{(\D_0 + \D_1 + \D_2)^{(\D_0 + \D_1 + \D_2) / 2} \D_3^{\D_3/2}}{(2\m)^\m}
\end{eqnarray*}
We complete the proof of Theorem \ref{th1b} by showing that 
\beq{f()<1}{
f(\textbf{w}) \leq \exp\set{- \frac{\e^2}{3}} \text{for all $\textbf{w} \in F$.}}
The proof of \eqref{f()<1} is a very long careful calculation and we have placed it in Section \ref{app1} of the appendix.
\section{Final Remarks}
There are a number of loose ends to be taken care of. Is Conjecture \ref{conj1} true? Is there a simpler expression for $\m_2$ of Theorem \ref{th1b}? Is it possible to get an exact expression for $\m_3$? On another tack, what are the expected running times of algorithms for computing these edge disjoint trees? They are polynomial time solvable problems, in the worst-case, but maybe their average complexity is significantly better than worst-case.

\appendix
\section{Proof of \eqref{f()<1}}\label{app1}
We remind the reader that the aim of this section is to show that $f(\textbf{w})<1$ for all\\ $\textbf{w}=(\s_0, \s_1, \D_0, \D_1, \D_2,\t, \m)\in F$, where 
\begin{align*}
f(\textbf{w})& =   
 \frac{(\t+1)^{\t+1}}{\s_0^{\s_0} \s_1^{\s_1} (1 - \s_0 - \s_1)^{1 - \s_0 - \s_1} \t^\t} \frac{\l^{2\m}}{f_3(\l)^{\t+1}} \frac{f_3(\l_0)^{\s_0}}{\l_0^{\D_0}} \frac{f_2(\l_1)^{\s_1}}{\l_1^{\D_1}} \frac{f_1(\l_2)^{1-\s_0-\s_1}}{\l_2^{\D_2}} \frac{f_3(\l_3)^\t}{\l_3^{\D_3}} \\
&\hspace{1in}\times \frac{(e\t)^{2-2\s_0-\s_1}}{2^{1-\s_0-\s_1}} \frac{(\D_0 + \D_1 + \D_2)^{(\D_0 + \D_1 + \D_2) / 2} \D_3^{\D_3/2}}{(2\m)^\m}
\end{align*}
and $F$ is the set of solutions to 
\begin{subequations}
\begin{align}
&\D_0 \geq 3\s_0, \D_1 \geq 2\s_1, \D_2 \geq 1 - \s_0 - \s_1, \D_3 \geq 3\t. \label{constraintsa}\\
&\frac{\D_0 + \D_1 + \D_2}{2} + \s_1 + 2(1 - \s_0 - \s_1) < 2-\e & \mbox{ since $i(S) < (2-\e)|S|$, \qquad see (\ref{good})}.\label{constraintsb}\\ 
& \s_0, \s_1 \geq 0, \s_0 + \s_1 \leq 1.  \label{constraintsc}\\
&0 \leq \t \leq (1 - \e) / \e  \mbox{ since $|S| \geq \e N$}.\label{constraintsd}\\ 
&\m \geq (2 + \e)(1 + \t)  \mbox{ since $M \geq (2 + \e)N$.}\label{constraintse}\\
&\s_0<1, \qquad\text{ otherwise $C_3$ is not connected}.\label{constraintsf}
\end{align}
\end{subequations}
\subsubsection{Eliminating $\m$}
We begin by showing that it is enough to consider $\m = (2 + \e)(1 + \t)$. We collect all terms involving $\m$, including $\D_3, \l$ and $\l_3$ whose values are determined in part by $\m$. It is enough to consider the logarithm of $f$. We have
\begin{multline*}
\frac{\partial \log f}{\partial \m}= 2\log \l + \frac{\partial \l}{\partial \m}\left(\frac{2\m}{\l} - \n\frac{f_2(\l)}{f_3(\l)}\right) + \frac{\partial \l_3}{\partial \m}\left( \t \frac{f_2(\l_3)}{f_3(\l_3)} - \frac{\D_3}{\l_3}\right)  \\
- 2\log \l_3 + \log \D_3 + 1 - \log 2\m - 1
\end{multline*}
by definition of $\l, \l_3$, we have 
$$\frac{2\m}{\l} - \n\frac{f_2(\l)}{f_3(\l)} = 0\text{ and }\frac{\D_3}{\l_3}-\t\frac{f_2(\l_3)}{f_3(\l_3)}= 0,$$ 
and so
\begin{equation}
\frac{\partial \log f}{\partial \m} = 2\log\left(\frac{\l}{\l_3}\right) + \log\left(\frac{\D_3}{2\m}\right)
\end{equation}
We have $\D_3\leq 2\m$ and furthermore, $\l \leq \l_3$ since $g_0$ is an increasing function. Indeed, writing $\iota = i(S) / s \leq 2$, we have $\D_3 + 2\iota = 2\m \geq 4(\t+1)$, so
\begin{equation}\label{lambda3lambda}
g_0(\l_3) - g_0(\l) = \frac{\D_3}{\t} - \frac{2\m}{\n} = \frac{2\m -2 \iota}{\t} - \frac{2\m}{\t+1} = \frac{2\m - 2\iota(\t+1)}{\t(\t+1)} \geq \frac{4 -2 \iota}{\t} \geq 0.
\end{equation}

This shows that $\log f$ is decreasing with respect to $\m$, and in discussing the maximum value of $f$ for $\m\geq (2+\e)(1+\t)$ we may assume that $\m = (2 + \e)(1 + \t)$. 

We now argue that to show that $f \leq \exp\{ -\e^2 / 3\}$ when $\m = (2 + \e)(1 + \t)$, it is enough to show that $f \leq 1$ when $\m = 2(1 + \t)$. Let $2 ( 1 + \t) < \m < (2 + \e)(1 + \t)$. Then by \eqref{D3=} and (\ref{constraintsa})
\begin{eqnarray*}
\D_3 &=& 2\m - 4 - \D_0 - \D_1 - \D_2 + 4\s_0 + 2\s_1 \\
&\leq& 2\m - 4 - 3\s_0 - 2\s_1 - (1 - \s_0 - \s_1) +4\s_0 + 2\s_1\\
&=& 2\m - 5 + 2\s_0 + \s_1 \\
&\leq& 2\m - 2
\end{eqnarray*}
and since $\t \leq 1/\e - 1$ by (\ref{constraintsd}), $\m \leq (2 + \e)(1+\t)$ implies $\m \leq 2 / \e + 1 < 3 / \e$. So,
\begin{equation}
\frac{\partial \log f}{\partial \m} \leq 2\log\left(\frac{\l}{\l_3}\right) + \log\left(\frac{2\m - 2}{2\m}\right)  \leq \log\left(1 - \frac{\e}{3}\right) 
\end{equation}
So, fixing $\textbf{w}' = (\s_0, \s_1, \D_0, \D_1, \D_2, \t)$, let $\m = 2(1 + \t)$ and $\m' = (2 + \e)(1 + \t)$. If $f(\textbf{w}', \m) \leq 1$, then
\begin{equation}
\log f(\textbf{w}', \m') \leq \log f(\textbf{w}', \m) +\e(1 + \t) \log\left(1 - \frac\e3\right)  \leq - \frac{\e^2}{3}.
\end{equation}

This shows that it is enough to prove that $f(\textbf{w}) \leq 1$ for $\textbf{w} \in F'$, defined by
\begin{subequations}
\begin{align}
& \D_0 \geq 3\s_0, \D_1 \geq 2\s_1, \D_2 \geq 1- \s_0 - \s_1, \D_3 \geq 3\t \label{constraint1} \\
& \D_0 + \D_1 + \D_2 \leq 4\s_0 + 2\s_1 \label{constraint2} \\
& \s_0, \s_1 \geq 0, \s_0 + \s_1 \leq 1 \label{constraint3}\\
& 0 \leq \t < \infty \label{constraint4}\\
& \m = 2(1 + \t).\label{constraint5}
\end{align}
\end{subequations}
We have relaxed equation \eqref{constraintsb} to give (\ref{constraint2}) in order to simplify later calculations. In $F'$, $\l$ is defined by 
$$g_0(\l) = \frac{2\m}{\n} = \frac{4(1+\t)}{1+\t}=4,$$ 
so in the remainder of the proof 
$$\l = g_0^{-1}(4) \approx 2.688\text{ is fixed}.$$ 
It will be convenient at times to write $\D = \D_0 + \D_1 + \D_2$. We observe that $3\s_0 + 2\s_1 + (1 - \s_0 -\s_1) = 2\s_0 + \s_1 + 1$, so by (\ref{constraint1}), (\ref{constraint2}),
\begin{equation}\label{eq:dbound}
2\s_0 + \s_1  +1 \leq \D \leq 4\s_0 + 2\s_1.
\end{equation}
Note also that $\m = 2(1 + \t)$ implies
\begin{equation}\label{eq:delta3}
\D_3 = 2\m - 4 - \D_0 - \D_1 - \D_2 + 4\s_0 + 2\s_1 = 4\t + 4\s_0 + 2\s_1- \D.
\end{equation}
The quantity $2\s_0 + \s_1$ will appear frequently. We note that (\ref{eq:dbound}) and $\s_0 + \s_1 \leq 1$ imply
\begin{equation}
1 \leq 2\s_0 + \s_1 \leq 2.
\end{equation}
\subsubsection{Eliminating $\t$}
We now turn to choosing the optimal $\t$. With $\m = 2(1 + \t)$,
\begin{eqnarray}
f(\s_0, \s_1, \D_0, \D_1, \D_2, \t) &=& \frac{(\t+1)^{\t+1}}{\s_0^{\s_0} \s_1^{\s_1} (1 - \s_0 - \s_1)^{1 - \s_0 - \s_1} \t^\t} \left(\frac{\l^4}{f_3(\l)}\right)^{\t+1} \frac{f_3(\l_0)^{\s_0}}{\l_0^{\D_0}} \frac{f_2(\l_1)^{\s_1}}{\l_1^{\D_1}} \nonumber \\
&& \times \frac{f_1(\l_2)^{1 - \s_0 - \s_1}}{\l_2^{\D_2}} \frac{f_3(\l_3)^\t}{\l_3^{\D_3}} \frac{(e \t)^{2 - 2\s_0 - \s_1}}{2^{1 - \s_0 - \s_1}}\times\frac{\D^{\D / 2}\D_3^{\D_3 / 2}}{(4 + 4 \t)^{2 + 2\t}}. \label{eq:fdef} 
\end{eqnarray}
Here $\l_0 = \l_0(\D_0, \s_0)$, $\l_1 = \l_1(\D_1, \s_1), \l_2 = \l_2(\D_2, \s_0, \s_1), \l_3 = \l_3(\s_0, \s_1, \D_0, \D_1, \D_2, \t)$ as defined in \eqref{l0}, \eqref{l1}, \eqref{l2}, \eqref{l3}. Since $\t  f_2(\l_3) / f_3(\l_3) - \D_3 / \l_3 = 0$ by the definition of $\l_3$, the partial derivative of $\log f$ with respect to $\t$ is given by
\begin{eqnarray*}
\frac{\partial}{\partial \t} \log f(\s_0, \s_1, \D_0, \D_1, \D_2, \t) &=& \log (\t + 1) + 1 - \log \t - 1  + \log\left(\frac{\l^4}{f_3(\l)}\right)  \\
&& + \frac{\partial \l_3}{\partial \t} \left(\t \frac{f_2(\l_3)}{f_3(\l_3)} - \frac{\D_3}{\l_3}\right) + \log(f_3(\l_3)) - 4 \log \l_3 \\
&& + \frac{2 - 2 \s_0 - \s_1}{\t} + 2\left(1 + \log \D_3\right) - 2\log(4+4\t) - 2 \\
&=& \log(\t + 1) - \log \t + \log\left(\frac{\l^4}{\l_3^4}\frac{f_3(\l_3)}{f_3(\l)}\right) + \frac{2 - 2\s_0 - \s_1}{\t} \\
&& + 2\log \D_3 - 2\log(4 + 4\t)
\end{eqnarray*}
This is positive for $\t$ close to zero. This is clear as long as $2\s_0+\s_1<2$. But if $2\s_0+\s_1=2$ then $\s_0+\s_1\leq 1$ implies that $\s_0=1,\s_1=0$. But then if $\t>0$ we have that $C_3$ is not connected and that if $\t=0$, $S=[N]$ which violates \eqref{constraintsf}. On the other hand, $\frac{\partial}{\partial \t} \log f$ vanishes if
\begin{equation}\label{eq:tdef}
2 - 2\s_0 - \s_1 - \t\left[\log\left(1 + \frac{1}{\t}\right) -2\log\left(\frac{\D_3}{4\t}\right) - \log\left(\frac{\l^4 f_3(\l_3)}{\l_3^4 f_3(\l)}\right)\right] = 0.
\end{equation}
So any local maximum of $f$ must satisfy this equation. If no solution exists, then it is optimal to let $\t \rightarrow \infty$. We will see below how to choose $\t$ to guarantee maximality. For now, we only assume $\t$ satisfies (\ref{eq:tdef}).
\subsubsection{Eliminating $\D_0,\D_1,\D_2$.}
We now eliminate $\D_0, \D_1, \D_2$. Fix $\s_0, \s_1$. For $\D_i > (3 - i)\s_i$ such that $\D_0 + \D_1 + \D_2 < 4\s_0 + 2\s_1$,
\begin{eqnarray}
\frac{\partial}{\partial \D_i} \log f &=& \frac{\partial \l_i}{\partial \D_i} \brac{\s_i \frac{f_{2-i}(\l_i)}{f_{3-i}(\l_i)} - \frac{\D_i}{\l_i}} - \log \l_i + \log \l_3 \nonumber\\
&& +  \frac{\partial}{\partial \t} \log f\;\;\frac{\partial \t}{\partial \D_i}
 + \frac{1}{2} \log \D + \frac12 - \frac12 \log\D_3 - \frac12\label{oldl}\\
&=&  - \log \l_i + \log\left(\l_3 \sqrt\frac{\D}{\D_3}\right),\nonumber
\end{eqnarray}
since $g_i(\l_i) = \D_i / \s_i$ by definition of $\l_i$, and the term $\frac{\partial}{\partial \t} \log f\;\;\partial \t / \partial \D_i$ vanishes because (\ref{eq:tdef}) is assumed to hold. We note that $\l_i > 0$ when $\D_i > (3 - i)\s_i$ (Appendix A), allowing division by $\l_i$. 

As $\D_i$ tends to its lower bound $(3 - i)\s_i$, we have $\log \l_i \rightarrow -\infty$ while the other terms remain bounded, so the derivative is positive at the lower bound of $\D_i$. Any stationary point must satisfy $\l_0 = \l_1 = \l_2 = \l_3 \sqrt{\D / \D_3} =: \widehat{\l}$. This can only happen if
\begin{equation}
\s_0 g_0(\lhat) + \s_1 g_1(\lhat) + (1 - \s_0 - \s_1) g_2(\lhat) = \s_0\frac{\D_0}{\s_0} + \s_1 \frac{\D_1}{\s_1} + (1-\s_0 - \s_1)\frac{\D_2}{1 - \s_0 - \s_1} = \D.
\end{equation}
So we choose $\lhat$, $\D, \t$ to solve the system of equations
\begin{eqnarray}
\lhat &=& \l_3 \sqrt{\frac{\D}{\D_3}} \nonumber \\
\D &=& \s_0 g_0(\lhat) + \s_1 g_1(\lhat) + (1-\s_0 - \s_1)g_2(\lhat) \label{lamdef} \\
2 - 2\s_0 - \s_1 &=& \t\left[\log\left(1 + \frac{1}{\t}\right) - 2\log\left(\frac{\D_3}{4\t}\right)  - \log\left(\frac{\l^4 f_3(\l_3)}{\l_3^4 f_3(\l)}\right)\right] \nonumber
\end{eqnarray}
In Appendix B we show that this system has no solution such that $2\s_0 + \s_1 + 1 \leq \D \leq 4\s_0 + 2\s_1$ (see (\ref{eq:dbound})). This means that no stationary point exists, and $\log f$ is increasing in each of $\D_0, \D_1, \D_2$. In particular, it is optimal to set 
\beq{setD}{
\D_0 + \D_1 + \D_2 = 4\s_0 + 2\s_1\text{ which implies that $\D_3=4\t$, see \eqref{eq:delta3}}.}
This eliminates one degree of freedom. We now set 
$$\D_2 = 4\s_0 + 2\s_1 - \D_0 - \D_1.$$ 
Then for $\D_0, \D_1$, we have
\begin{equation}
\frac{\partial}{\partial \D_i} \log f = - \log \l_i + \log \l_2, \quad i = 0,1.
\end{equation}
To see this note that \eqref{oldl} has to be modified via the addition of $\frac{\partial}{\partial \D_2} \log f \times \frac{\partial \D_2}{\partial \D_i}$, for $i=0,1$.

So it is optimal to let $\l_0 = \l_1 = \l_2 = \lbar$, defined by
\begin{equation}\label{eq:lbar}
 \s_0 g_0(\lbar) + \s_1 g_1(\lbar) + (1-\s_0 - \s_1)g_2(\lbar) = 4\s_0 + 2\s_1 
\end{equation}
This has a unique solution $\lbar \geq 0$ whenever $2\s_0 + \s_1 \geq 1$, since for fixed $\s_0, \s_1$, the left-hand side is a convex combination of increasing functions, by Lemma \ref{gconvex}, Appendix C. This defines $\D_i = \D_i(\s_0, \s_1)$ by
\begin{equation}\label{deltadef}
\D_0 = g_0(\lbar) \s_0, \quad \D_1 = g_1(\lbar)\s_1, \quad \D_2 = g_2(\lbar)(1-\s_0 - \s_1)
\end{equation}
We note at this point that $\lbar \leq\l$. Indeed, by \eqref{setD} and \eqref{constraintsa},
$$\D_0=4\s_0 + 2\s_1 - \D_1 - \D_2 \leq 4\s_0 + 2\s_1 - 2\s_1 - (1-\s_0 - \s_1) \leq 4\s_0,$$ 
so 
\begin{equation}\label{lbarinequality}
g_0(\lbar) = \frac{\D_0}{\s_0} \leq 4 = g_0(\l)
\end{equation}
implying that $\lbar \leq \l$, since $g_0$ is increasing.

This choice (\ref{deltadef}) of $\D_0, \D_1, \D_2$ simplifies $f$ significantly. With $\D = 4\s_0 + 2\s_1$ we have $\D_3 = 4\t$,  see \eqref{setD}, and so 
\beq{l3=}{
\l_3 = g_0^{-1}\bfrac{4\t}{\t} = \l} 
is fixed. In particular, the relation (\ref{eq:tdef}) for $\t$ simplifies to
\begin{equation}\label{eq:tsimple}
2 - 2\s_0 - \s_1 = \t\log\left(1 + \frac{1}{\t}\right)
\end{equation}
Let $\f(\t) = \t\log(1 + 1/\t)$. Then $\f''(\t) = - \t^{-1}(\t+1)^{-2}$, so $\f$ is concave and then $\lim_{\t\rightarrow 0} \f(\t) = 0$, $\lim_{\t\rightarrow \infty} \f(\t) = 1$ implies that $\f$ is strictly increasing and takes values in $[0, 1)$ for $\t \geq 0$. This means that (\ref{eq:tsimple}) has a unique solution if and only if $2\s_0 + \s_1 > 1$. When $2\s_0 + \s_1 = 1$, $f$ is increasing with respect to $\t$, and we treat this case now.

If $2\s_0 + \s_1 = 1$, then (\ref{eq:dbound}) implies that $\D=2$. Furthermore, $\D_3 = 4\t$ (see (\ref{eq:delta3})) and $\l_3 = \l$ (see \eqref{l3=}) and $g_i(0) = 3 - i$ implies that 
$$\s_0 g_0(0) + \s_1g_1(0) + (1 - \s_0 - \s_1)g_2(0) = 2\s_0 + \s_1 + 1 = 4\s_0 + 2\s_1,$$ 
so $\lbar = 0$ is the unique solution to (\ref{eq:lbar}). Then since $\D_i / \s_i = g_i(0) = 3 - i$ (Lemma \ref{gibounds}, Appendix C), we have $\D_i = (3-i)\s_i$, $i=0,1,2$, and as in (\ref{lambdazero}), \eqref{S1case}, \eqref{S2case},
\begin{equation}
\frac{f_3(\lbar)^{\s_0} f_2(\lbar)^{\s_1} f_1(\lbar)^{1 - \s_0 - \s_1}}{\lbar^\D} = \left(\frac{f_3(\lbar)}{\lbar^3}\right)^{\s_0} \left(\frac{f_2(\lbar)}{\lbar^2}\right)^{\s_1}\left(\frac{f_1(\lbar)}{\lbar}\right)^{1 - \s_0 - \s_1} = \frac{1}{6^{\s_0}}\frac{1}{2^{\s_1}}
\end{equation}
so when $2\s_0 + \s_1 = 1$, (\ref{eq:fdef}) becomes
\begin{equation}
f(\s_0, \s_1, \t) = \frac{(\t+1)^{\t+1}}{\s_0^{\s_0} \s_1^{\s_1} (1 - \s_0 - \s_1)^{1 - \s_0 - \s_1} \t^\t} \frac{\l^4}{f_3(\l)} \frac{1}{6^{\s_0}}\frac{1}{2^{\s_1}} \frac{e\t}{2^{1 - \s_0 - \s_1}} \frac{2^{2 / 2} (4\t)^{2\t}}{(4 + 4\t)^{2 + 2\t}}.
\end{equation}
In this computation we also used the fact that $\l=\l_3$ (see \eqref{l3=}) and $\D_3=4\t$ (see (\ref{eq:delta3})) to find that
$$\left(\frac{\l^4}{f_3(\l)}\right)^{\t+1}\frac{f_3(\l_3)^\t}{\l_3^{\D_3}}=\frac{\l^4}{f_3(\l)}.$$
Here $\l^4 / f_3(\l) \approx 7.05$ is fixed. As noted in the discussion after (\ref{eq:tsimple}), the partial derivative in $\t$ is positive for all $\t$, so we let $\t \rightarrow \infty$. Substituting $\s_1 = 1 - 2\s_0$ we are reduced to
\begin{eqnarray*}
f(\s_0) &=& \lim_{\t \rightarrow \infty} \frac{(\t+1)^{\t+1}}{\s_0^{\s_0} (1 - 2\s_0)^{(1 - 2\s_0)} \s_0^{\s_0} \t^\t} \frac{\l^4}{f_3(\l)} \frac{1}{6^{\s_0}}\frac{1}{2^{1 - 2\s_0}}  \frac{e\t}{2^{\s_0}} \frac{2 (4\t)^{2\t}}{(4 + 4\t)^{2 + 2\t}} \\
&=& \frac{ \l^4}{16 f_3(\l)} \frac{1}{\s_0^{2\s_0} (1 - 2\s_0)^{1 - 2\s_0} 3^{\s_0}}
\end{eqnarray*}
This has the stationary point $\s_0 = 2 - \sqrt{3}$, and $f(2 - \sqrt{3}) \approx 0.95$. We also have $f(0) \approx 0.44$ and $f(1 / 2) \approx 0.51$ at the lower and upper bounds for $\s_0$.

\subsubsection{Dealing with $\s_0,\s_1$}
With this, we have reduced our analysis to the variables $\s_0, \s_1$ in the domain 
$$E = \{(\s_0, \s_1) : \s_0, \s_1 \geq 0, \s_0 + \s_1 \leq 1, 2\s_0 + \s_1 \geq 1\}.$$ 
We just showed that $f \leq 1$ in 
$$E_0 = \{(\s_0, \s_1) \in E : 2\s_0 + \s_1 = 1\}.$$ 
Further define 
\begin{align*}
E_1 &= \{(\s_0, \s_1) \in E : 0.01 \leq \s_1 \leq 0.99\},\\
E_2 &= \{(\s_0, \s_1) \in E : 0 \leq \s_1 < 0.01\},\\ 
E_3 &= \{(\s_0, \s_1) \in E : 0.99 < \s_1 \leq 1\}.
\end{align*}
 We will show that $f \leq 1$ in each of these sets, whose union covers $E$.

From this point on, let $\partial_i = \frac{\partial}{\partial \s_i}, i = 0,1$. As mentioned above, $\D = 4\s_0 + 2\s_1$ simplifies $f$. Specifically, if $2\s_0 + \s_1 > 1$ then \eqref{eq:fdef} becomes, after using \eqref{setD} and \eqref{l3=},
\begin{eqnarray}
f(\s_0, \s_1) &=& \frac{(\t + 1)^{\t + 1}}{\s_0^{\s_0} \s_1^{\s_1} (1 - \s_0 - \s_1)^{1 - \s_0 - \s_1} \t^\t} \frac{\l^4}{f_3(\l)} \frac{f_3(\lbar)^{\s_0} f_2(\lbar)^{\s_1} f_1(\lbar)^{1 - \s_0 - \s_1}}{\lbar^{4\s_0 + 2\s_1}} \nonumber \\
&& \times \frac{(e\t)^{2 - 2\s_0 - \s_1}}{2^{1 - \s_0 - \s_1}} \frac{(4\s_0 + 2\s_1)^{2 \s_0 + \s_1}(4\t)^{2\t}}{(4 + 4\t)^{2 + 2\t}} \label{fdef}
\end{eqnarray}
In (\ref{eq:tsimple}), (\ref{eq:lbar}) respectively, $\t$ and $\lbar$ are given as functions of $\s_0, \s_1$. Recall that $\l = g_0^{-1}(4)$ is constant. So
\begin{align}
&\partial_0 \log f(\s_0, \s_1) =\nonumber\\
& - \log \s_0 - 1 + \log(1-\s_0 - \s_1) + 1 + \log f_3(\lbar) - \log f_1(\lbar)  \nonumber \\
&   - 4\log \lbar -2 \log (e\t) + \log 2 + 2\log(4\s_0 + 2\s_1) + 2 \nonumber \\
& + \frac{\partial \lbar}{\partial \s_0}\left(\s_0 \frac{f_2(\lbar)}{f_3(\lbar)} + \s_1\frac{f_1(\lbar)}{f_2(\lbar)} + (1-\s_0 - \s_1)\frac{f_0(\lbar)}{f_1(\lbar)} - \frac{4\s_0 + 2\s_1}{\lbar}\right) \nonumber \\
& + \frac{\partial \t}{\partial \s_0} \bigg(\log(\t+1) + 1 - \log \t - 1 + \frac{2 - 2\s_0 - \s_1}{\t} + 2\log 4\t + 2 - 2\log(4+4\t) - 2\bigg) \nonumber \\
&= \log\left(\frac{1-\s_0-\s_1}{\s_0}\right) + \log\left(\frac{f_3(\lbar)}{\lbar^4 f_1(\lbar)}\right) - 2\log \t + \log 2 + 2\log (4\s_0 + 2\s_1)\label{eq:dds0}
\end{align}
where, as expected, the terms involving $\partial_0 \t$ and $\partial_0 \lbar$ vanish since $\t, \lbar$ were chosen to maximize $\log f$. (See \eqref{eq:tsimple} and \eqref{eq:lbar} respectively).

Similarly,
\begin{align} 
&\partial_1 \log f(\s_0, \s_1) =\nonumber\\
& - \log \s_1 - 1 + \log(1-\s_0 - \s_1) + 1 + \log f_2(\lbar) - \log f_1(\lbar)  \nonumber \\
&   - 2\log \lbar -\log (e\t) + \log 2 + \log(4\s_0 + 2\s_1) + 1 \nonumber \\
& + \frac{\partial \lbar}{\partial \s_1}\left(\s_0 \frac{f_2(\lbar)}{f_3(\lbar)} + \s_1\frac{f_1(\lbar)}{f_2(\lbar)} + (1-\s_0 - \s_1)\frac{f_0(\lbar)}{f_1(\lbar)} - \frac{4\s_0 + 2\s_1}{\lbar}\right) \nonumber \\
& + \frac{\partial \t}{\partial \s_1} \bigg(\log(\t+1) + 1 - \log \t - 1 + \frac{2 - 2\s_0 - \s_1}{\t} + 2\log 4\t + 2 - 2\log(4+4\t) - 2\bigg) \nonumber \\
& =\log\left(\frac{1 - \s_0-\s_1}{\s_1}\right) + \log\left(\frac{f_2(\lbar)}{\lbar^2 f_1(\lbar)}\right) - \log \t + \log 2 + \log (4\s_0 + 2\s_1). \label{eq:dds1}
\end{align}
Any stationary point must satisfy
\beq{qpr}{
(\partial_0 - 2\partial_1) \log f = \log\left(\frac{\s_1^2}{\s_0(1-\s_0 - \s_1)}\right) + \log\left(\frac{f_1(\lbar)f_3(\lbar)}{f_2(\lbar)^2}\right) - \log 2 = 0.}
It is shown in Lemma \ref{finequalities}, Appendix C that
$$1 \leq \frac{f_2(\lbar)^2}{f_1(\lbar)f_3(\lbar)} \leq 2.$$ 
This means from \eqref{qpr} that if $(\partial_0 - 2\partial_1) \log f=0$ then
$$2 \leq \frac{\s_1^2}{\s_0(1-\s_0 - \s_1)} \leq 4.$$
In particular, the lower bound implies $\s_0 \geq (1 - \s_1) / 2 + \sqrt{1 - 2\s_1 - \s_1^2} / 2$ and the upper bound implies $\s_1 \leq - 2\s_0  + \sqrt{4\s_0 - 4\s_0^2 }$. The latter bound is used only to conclude that $\s_1 < 1 / 2$, by noting that $-2\s_0 + \sqrt{4\s_0 - 4\s_0^2 } \leq (5^{1/2}-1)/3< 1/ 2$ for $0 \leq \s_0 \leq 1$. In conclusion,
\begin{equation}\label{stationaryconditions}
(\partial_0 - 2\partial_1) \log f = 0 \Longrightarrow \left\{\begin{array}{l} \s_0 \geq (1 - \s_1) / 2 + \sqrt{1 - 2\s_1 - \s_1^2}/2. \\ \s_1 < 1 / 2. \end{array}\right. 
\end{equation}

\textbf{Case One.} $E_1 = \{(\s_0, \s_1) \in E : 0.01 \leq \s_1 \leq 0.99\}$

We need a lower bound for $\lbar \t$. We first note that $g_i(\lbar) \leq 3 - i + \lbar$ (Lemma \ref{gibounds}, Appendix C) implies
\begin{equation}
4\s_0 + 2\s_1 = \s_0 g_0(\lbar) + \s_1g_1(\lbar) + (1 - \s_0 - \s_1)g_2(\lbar) \leq 2\s_0 + \s_1 + 1 + \lbar
\end{equation}
so 
$$\lbar \geq 2\s_0 + \s_1 - 1=1-\t\log(1 + 1/\t).$$ 
Here we have used \eqref{eq:tsimple}.

For $\t$, note that $\s_0 < 0.99$ and $\s_0 + \s_1 \leq 1$ implies $\t\log(1 + 1/\t) = 2-2\s_0 - \s_1 \geq 1 - \s_0 > 0.01$. The function $\t\log(1 + 1 / \t)$ is increasing in $\t$ by the discussion after (\ref{eq:tsimple}). This implies 
\beq{lowtau}{
\t > 10^{-3},} 
since $0.001 \log(1001) < 0.01$. 

If $\t\leq 1.1$,
$$\lbar\geq 1-1.1\log \left(1 + \frac{1}{1.1}\right) >0.1.$$
So, if $\t\leq 1.1$,
$$\lbar\t\geq 10^{-4}.$$
If $1.1<\t$ then we use $\log(1+x)\leq x-x^2/2+x^3/3$ for $|x|\leq 1$ to write
$$\lbar\t\geq \t-\t^2\log(1+1/\t)\geq \frac{1}{2}-\frac{1}{3\t}\geq \frac{1}{6}.$$
So, in $E_1$, we have 
\begin{equation}\label{ltbound}
\lbar \t \geq 10^{-4}.
\end{equation}

By definition of $E_1$, $\s_0 \geq 0.01$ and $\s_1 \geq 0.01$. By (\ref{lbarinequality}), $0 \leq \lbar \leq \l$. This implies $f_3(\lbar) / \lbar^2 f_1(\lbar) \leq 1/6$ and $f_2(\lbar) / \lbar f_1(\lbar) \leq 1/3$ (Lemma \ref{finequalities}, Appendix C). So after rewriting (\ref{eq:dds0}) slightly,
\begin{eqnarray}
\partial_0 \log f(\s_0, \s_1) &=&\log\left(\frac{1 - \s_0 - \s_1}{\s_0}\right) + \log\left(\frac{f_3(\lbar)}{\lbar^2f_1(\lbar)}\right) - 2\log \lbar\t + \log 2 + 2\log(4\s_0 + 2\s_1) \nonumber \\
&\leq& \log \frac{1}{0.01} + \log \frac{1}{6} -2\log 10^{-4} + \log 2 + 2\log 4 \label{d0bound} \\
&\leq& 25. \nonumber
\end{eqnarray}
Similarly, (\ref{eq:dds1}) is bounded by
\begin{eqnarray*}
\partial_1 \log f(\s_0, \s_1) &\leq& \log \frac{1}{0.01} + \log \frac13 - \log 10^{-4} + \log 2 + \log 4 \leq 15.
\end{eqnarray*}
We now show numerically that $\log f \leq 0$ in $E_1$.

{\bf Numerics of Case One:}\\
Since $\partial_i \log f$ is only bounded from above, $i =0, 1$, this requires some care at the lower bounds of $\s_0, \s_1$, given by $\s_0 \geq (1 - \s_1) / 2$ and $\s_1 \geq 0.01$. Note that if $\s_0 = (1 - \s_1) / 2$, then $(\s_0, \s_1) \in E_0$ and it was shown above that $\log f(\s_0, \s_1) \leq \log 0.96 \leq -0.01$. Define a finite grid $P \subseteq E_1$ such that for any $(\s_0, \s_1) \in E_1$, there exists $(\overline{\s}_0, \overline{\s}_1) \in P \cup E_0$ where $0 \leq \s_0 - \overline{\s}_0 \leq \d$ and $0 \leq \s_1 - \overline{\s}_1 \leq \d$. Here $\d = 1 / 4000$. Numerical calculations will show that $\log f(\overline{\s}_0, \overline{\s}_1) \leq -0.01$ for all $(\overline{\s}_0, \overline{\s}_1) \in P$. This implies that for all $\s_0, \s_1 \in E_1$,
$$\log f(\s_0, \s_1) \leq \max_{\overline{\s}_0, \overline{\s}_1 \in P \cup E_0} \log f(\overline{\s}_0, \overline{\s}_1) + 25\d + 15\d \leq -0.01 + 40\d \leq 0.$$

When calculating $\log f(\overline{\s}_0, \overline{\s}_1)$, approximations $\lbar_{num}, \t_{num}$ of $\lbar(\overline{\s}_0, \overline{\s}_1), \t(\overline{\s}_0, \overline{\s}_1)$ must be calculated with sufficient precision. By definition of $\lbar$, $\partial  \log f / \partial \lbar = 0$, while
\begin{align*}
&\left|\frac{\partial^2 \log f}{\partial \lbar^2}\right|\\
& = \bigg| \s_0\left(\frac{f_1(\lbar)}{f_3(\lbar)} - \frac{f_2(\lbar)^2}{f_3(\lbar)^2}\right) + \s_1 \left(\frac{f_0(\lbar)}{f_2(\lbar)} - \frac{f_1(\lbar)^2}{f_2(\lbar)^2}\right) \\
& \hspace{1in}+ (1-\s_0 - \s_1)\left(\frac{f_0(\lbar)}{f_1(\lbar)} - \frac{f_0(\lbar)^2}{f_1(\lbar)^2}\right) + \frac{4\s_0 + 2\s_1}{\lbar^2} \bigg|\\
&=\frac{1}{\lbar^2}  \bigg| \s_0\left(\lbar^2\frac{f_1(\lbar)}{f_3(\lbar)} - \frac{\lbar^2f_2(\lbar)^2}{f_3(\lbar)^2}\right) + \s_1 \left(\lbar^2\frac{f_0(\lbar)}{f_2(\lbar)} - \frac{\lbar^2f_1(\lbar)^2}{f_2(\lbar)^2}\right) \\
& + (1-\s_0 - \s_1)\left(\lbar^2\frac{f_0(\lbar)}{f_1(\lbar)} - \frac{\lbar^2f_0(\lbar)^2}{f_1(\lbar)^2}\right) + \s_0g_0(\lbar) + \s_1g_1(\lbar) + (1-\s_0-\s_1)g_2(\lbar) \bigg| \\
&= \frac{1}{\lbar^2}| \s_0g_0(\lbar)(g_1(\lbar) - g_0(\lbar) + 1) + \s_1g_1(\lbar)(g_2(\lbar) - g_1(\lbar) + 1) \\
& \hspace{2in}+ (1-\s_0-\s_1)g_2(\lbar)(\lbar - g_2(\lbar) + 1)| \\
&\leq \frac{9}{\lbar^2} |\s_0g_0(\lbar) + \s_1g_1(\lbar) + (1-\s_0 -\s_1)g_2(\lbar)| \\
&=\frac{9}{\lbar^2}|4\s_0+2\s-1|,\qquad\text{by \eqref{eq:lbar}}\\
&\leq \frac{36}{\lbar^2}.
\end{align*}
Here we use the fact that $g_i(\lbar) \leq 4$ for $0\leq \lbar \leq \l$, $i = 0,1,2$ to conclude that $|g_1-g_0+1|, |g_2-g_1+1|, |\lbar - g_2+1| \leq 9$, and the final step uses $4\s_0 + 2\s_1 \leq 4$. So the error contributed by $\lbar_{num}$ is
\begin{equation}
|\log f(\overline{\s}_0, \overline{\s}_1; \lbar_{num}) - \log f(\overline{\s}_0, \overline{\s}_1; \lbar)| \leq (\lbar_{num}-\lbar)^2 \frac{36}{\lbar^2}
\end{equation}
and to achieve a numerical error of at most $10^{-4}$, we require that $|\lbar_{num} / \lbar - 1| \leq 10^{-2} / 6$.

Similarly by definition of $\t$, $\partial  \log f / \partial \t = 0$, while 
$$\left|\frac{\partial^2\log f}{\partial \t^2}\right|=\left|\frac{1}{\t(\t+1)}\right|
\leq 10^3,\text{ by \eqref{lowtau}}.$$
Thus to achieve a numerical error of at most $10^{-4}$, it suffices to have $|\t_{num} / \t - 1| \leq 10^{-2}$. 

With the above precision, it is found that over all $(\overline{\s}_0, \overline{\s}_1) \in P \cup E_0$, $\log f(\overline{\s}_0, \overline{\s}_1) \leq -0.0105$ numerically. With an error tolerance of $10^{-4}$, this shows that $\log f(\overline{\s}_0, \overline{\s}_1) \leq -0.01$.

\textbf{Case Two.} $E_2 = \{(\s_0, \s_1) \in E : 0 \leq \s_1 < 0.01\}$

We divide $E_2$ into three subregions,
\begin{align*}
&E_{2, 1} = \{(\s_0, \s_1) \in E_2 : \s_1 = 0\},\\
&E_{2, 2} = \{(\s_0, \s_1) \in E_2 : \s_0 + \s_1 = 1\},\\ 
&E_{2, 3} = E_2 \setminus (E_{2, 1} \cup E_{2, 2}).
\end{align*}
We begin by considering the point $(\s_0, \s_1) = (1, 0)$. Here $4\s_0 + 2\s_1 = 4$, and from (\ref{eq:lbar}) $\lbar$ is defined by $g_0(\lbar) = 4$. So $\lbar = g_0^{-1}(4) = \l$. We also have $2 - 2\s_0 - \s_1 = 0$, and from the definition (\ref{eq:tsimple}) of $\t$ we have $\t = 0$. Plugging this into the definition of $f$ (\ref{fdef}) gives $f(1, 0) = 1$.

{\bf Sub-Case 2.1a:}\\
Now consider $E_{2, 1}$, where $\s_1 = 0$. Here $\s_0\geq 1/2$, from the definition of $E$ and
\begin{equation*}
\partial_0 \log f(\s_0, 0) = \log\left(\frac{1 - \s_0}{\s_0}\right) + \log\left(\frac{f_3(\lbar)}{\lbar^2f_1(\lbar)}\right) - 2\log \lbar\t + \log 2 + 2\log(4\s_0)
\end{equation*}
Within $E_{2, 1}$, we consider two cases. First suppose $\s_0 \leq 0.99$. As noted in (\ref{ltbound}), $\s_0 \leq 0.99$ implies $\lbar \t \geq 10^{-4}$. Applying the same bounds as in (\ref{d0bound}),
\begin{equation}
\partial_0 \log f(\s_0, 0) \leq  \log \frac{1}{6} - 2 \log 10^{-4} + \log 2 + 2\log 4 \leq 21
\end{equation}
and we show numerically that $f \leq 1$. The numerical calculations for this case now follow the same outline as above. The precision requirements given there will suffice in this case. 

{\bf Sub-Case 2.1b:}\\
Now suppose $\s_0 \geq 0.99$, still assuming $\s_1 = 0$. Here $\lbar \leq \l$ (see (\ref{lbarinequality})) implies $f_3(\lbar) / \lbar^4 f_1(\lbar) \geq 0.01$ by Lemma \ref{finequalities}, Appendix C. We have $\t\log(1 + 1 / \t) = 2 - 2\s_0 - \s_1 = 2 - 2\s_0 \leq 0.02$ and since $\t\log(1 + 1 / \t)$ is increasing (see (\ref{eq:tsimple})), it follows from a numerical calculation that $\t \leq 0.004$. This implies
\begin{equation}
\frac{1 - \s_0}{\t^2} = \frac{\log\left(1 + \frac{1}{\t}\right)}{2\t} \geq 125\log 250
\end{equation}
and
\begin{eqnarray*}
\partial_0 \log f(\s_0, 0) &=& \log\left(\frac{1 - \s_0 }{\s_0}\right) + \log\left(\frac{f_3(\lbar)}{\lbar^4 f_1(\lbar)}\right) - 2\log \t + \log 2 + 4\log (4\s_0) \\
&=& \log\left(\frac{1-\s_0}{\t^2}\right) - \log \s_0 + \log\left(\frac{f_3(\lbar)}{\lbar^4 f_1(\lbar)}\right) + \log 2 + 4\log (4\s_0) \\
&\geq& \log (125\log 250) + \log 0.01 + \log 2 + 2\log 3.96 > 0
\end{eqnarray*}
which implies $f(\s_0, 0) < f(1, 0) = 1$ for $\s_0 \geq 0.99$.

{\bf Sub-Case 2.2:}\\
Now consider $E_{2, 2}$, i.e. suppose $\s_0 + \s_1 = 1$ and $\s_1 < 0.01$. Then
\begin{equation}\label{79}
\partial_0 \log f(\s_0, 1 - \s_0) = \log\left(\frac{1 - \s_0}{\s_0}\right) + \log\left(\frac{f_3(\lbar)}{\lbar^2 f_2(\lbar)}\right) - \log \t + \log(2 + 2\s_0)
\end{equation}

By Lemma \ref{finequalities}, Appendix C, $\lbar \leq \l$ implies 
$$\frac{f_3(\lbar)}{ \lbar^2f_2(\lbar)} > 0.09.$$

As $\s_1 = 1 - \s_0$, $\t$ is defined by $\t \log(1  + 1/\t)  = 2 - 2\s_0 - \s_1 =  \s_1$. So $\t\log(1 + 1 / \t) \leq 0.01$, implying $\t \leq 0.003$ since $\t\log(1 + 1/\t)$ is increasing, and so
\begin{equation}
\frac{1 - \s_0}{\t} = \frac{\s_1}{\t} = \log\left(1 + \frac{1}{\t}\right) > \log 333.
\end{equation}
So,
\begin{eqnarray*}
\partial_0 \log f(\s_0, 1 - \s_0) &=& \log\left(\frac{1-\s_0}{\t}\right) + \log\left(\frac{f_3(\lbar)}{\lbar^2 f_2(\lbar)}\right) - \log \s_0 + \log(2 + 2\s_0) \\
&\geq& \log \log 333 + \log 0.09  + \log 3.98 \\
&>& 0 
\end{eqnarray*}
and for all $0.99\leq \s_0<1$, $f(\s_0, 1 - \s_0) < f(1, 0) = 1$.

{\bf Sub-Case 2.3:}\\
Now consider $E_{2, 3}$, i.e. suppose $0 < \s_1 < 1 - \s_0$ and $\s_1 < 0.01$. We show that the gradient $\nabla \log f \neq 0$. Assume $(\partial_0 - 2\partial_1)\log f = 0$. By (\ref{stationaryconditions}) we must have $\s_0 \geq (1 - \s_1) / 2 + \sqrt{1 - 2\s_1 - \s_1^2} / 2$. Since $\s_1 \leq 0.01$, we can replace this by the weaker bound $\s_0 \geq 1 - 1.1\s_1 $. 
We trivially have $1 - \s_0 \geq (2-2\s_0 - \s_1) / 2$, so
\begin{equation}
\frac{\s_1}{\t} \geq \frac{1}{1.1} \frac{1 - \s_0}{\t} \geq \frac{1}{2.2} \frac{2-2\s_0 -\s_1}{\t} = \frac{1}{2.2}\log\left(1 + \frac{1}{\t}\right)
\end{equation}
Since $\t\log(1 + 1/\t) = 2 - 2\s_0 - \s_1 \leq 1.2\s_1  \leq 0.012$, we have $\t < 0.002$. So $\s_1 / \t \geq \log(500) / 2.2$.

This allows us to show that if $(\partial_0 - 2\partial_1) \log f = 0$ and $\s_1 \leq 0.01$, then $(\partial_0 - \partial_1) \log f \neq 0$. Noting that $4\s_0 + 2\s_1 \geq 4(1 - 1.1\s_1) + 2\s_1 \geq 3.976$,
\begin{eqnarray*}
(\partial_0 - \partial_1)\log f &=& \log\left(\frac{\s_1}{\t}\right) + \log\left(\frac{f_3(\lbar)}{\lbar^2f_2(\lbar)}\right) - \log \s_0 + \log(4\s_0 + 2\s_1)  \\
&\geq&  \log (\log(500)/2.2) + \log 0.09 + \log 3.976 \\
&=&1.038445...-2.407945...+1.380276...\\
&>& 0
\end{eqnarray*}
This shows that $\nabla \log f \neq 0$ in $E_{2, 3}$. The boundary of $E_{2, 3}$ is contained in $E_0 \cup E_{2, 1} \cup E_{2, 2} \cup E_1$. Since $f \leq 1$ on the boundary of $E_{2, 3}$ and $\nabla \log f \neq 0$ in $E_{2, 3}$, it follows that $f\leq 1$ in $E_{2, 3}$.

\textbf{Case Three:} $E_3 = \{(\s_0, \s_1) \in E :  0.99 < \s_1 \leq 1\}$.

Further divide $E_3$ into
\begin{align*}
E_{3, 1}& = \{(\s_0, \s_1) \in E_3 : \s_0 + \s_1 = 1\},\\ 
E_{3, 2}& = E_3 \setminus E_{3, 1}.
\end{align*}

{\bf Sub-Case 3.1:}\\
Consider $E_{3, 1}$, i.e. suppose $\s_0 + \s_1  = 1$ and $\s_0 < 0.01$. Then we write, see \eqref{79},
\begin{equation}
\partial_0 \log f(\s_0, 1 - \s_0) = \log\left(\frac{1 - \s_0}{\s_0}\right) + \log\left(\frac{1}{g_0(\lbar)}\right) - \log \lbar\t + \log(2 + 2\s_0)
\end{equation}
To show that this is positive, we bound $\lbar \t$ from above. From (\ref{tau2}) (Appendix B) with $\D = 4\s_0 + 2\s_1$ we have $\t \leq 1 / (4\s_0 + 2\s_1 -2 )$. For $\lbar$, we use the bound derived in Appendix B (\ref{l2bound}). Note that if $\D=4\s_0+2\s_1$ then $L_2=\lbar$ in (\ref{l2bound}). So, 
\begin{equation}
\lbar\leq \frac{12(4\s_0 + 2\s_1 - 2\s_0 - \s_1 - 1)}{6 - 3\s_0 - 2\s_1} \leq 12(2\s_0 + \s_1 - 1)\le 12.
\end{equation}
These two bounds together imply $\lbar \t \leq 6$. For all $0 \leq \lbar \leq \l$ we have $3 \leq g_0(\lbar) \leq 4$ since $3 \leq \D_0 / \s_0 \leq 4$ (see the discussion before (\ref{lbarinequality})).

We conclude that
\begin{equation}
\partial_0 \log f(\s_0, 1 - \s_0) \geq \log \frac{0.99}{0.01} + \log \frac{1}{4} - \log 6 + \log 2 > 0
\end{equation}
This implies that for all $(\s_0, \s_1) \in E_{3, 1}$, $f(\s_0, \s_1) \leq f(0.01, 0.99) \leq 1$, since $(0.01, 0.99) \in E_1$.

{\bf Sub-Case 3.2:}\\
Now consider $E_{3, 2}$. As noted in (\ref{stationaryconditions}), any stationary point of $\log f$ must satisfy $\s_1 < 1 / 2$, so $E_{3, 2}$ contains no stationary point. The boundary of $E_{3, 2}$ is contained in $E_0 \cup E_1 \cup E_{3, 1}$, and it has been shown that $f \leq 1$ in each of $E_0, E_1, E_{3, 1}$. It follows that $f \leq 1$ in $E_{3, 2}$.

This completes the proof of Lemma \ref{supercritical} and Theorem \ref{th1b}.

\section*{Appendix B}

This section is concerned with showing that the system of equations (\ref{lamdef}) under certain conditions has no solution. Throughout the section, assume $\t$ satisfies (\ref{eq:tdef}): Recall that $\D_3=4\t + 4\s_0 + 2\s_1 - \D$,
\begin{equation}\label{tcomplicated}
\t\left(\log\left( 1 + \frac{1}{\t}\right) - 2\log\left(\frac{4\t + 4\s_0 + 2\s_1 - \D}{4\t}\right) - \log\left(\frac{\l^4}{\l_3^4}\frac{f_3(\l_3)}{f_3(\l)}\right)\right) = 2 - 2\s_0 - \s_1.
\end{equation}
Here $\l = g_0^{-1}(4) \approx 2.688$ is fixed, and $\l_3$ is defined by $\l_3 = g_0^{-1}(\D_3 / \t)$.

Define for $2\s_0 + \s_1 + 1 \leq \D \leq 4\s_0 + 2\s_1$
\begin{equation}
L_1(\s_0, \s_1, \D, \t) = \l_3 \sqrt{\frac{\D}{4\t + 4\s_0 + 2\s_1 - \D}}
\end{equation}
and define $L_2(\s_0, \s_1, \D)$ as the unique solution to $G(\s_0, \s_1, L_2(\s_0, \s_1, \D)) = \D$, where $G$ is defined by
\begin{equation}\label{l2def}
G(\s_0, \s_1,x) = \s_0 g_0(x) + \s_1 g_1(x) + (1 - \s_0 - \s_1) g_2(x).
\end{equation}
This is well defined because each $g_i$ is strictly increasing, and for fixed $\s_0, \s_1$ we have $G(\s_0, \s_1, 0) = 2\s_0 + \s_1 + 1 \leq \D$ and $\lim_{x \rightarrow \infty} G(\s_0, \s_1, x) = \infty$ (see Appendix B). Define 
$$R=\set{(\s_0, \s_1, \D, \t)\in \mathbb{R}_+^4: \s_0 + \s_1 \leq 1; 2\s_0 + \s_1 \geq 1;\; 2\s_0 +\s_1 + 1 \leq \D \leq 4\s_0 + 2\s_1;\;\text{(\ref{tcomplicated}) holds}.}$$
We prove that the system (\ref{lamdef}) is inconsistent by proving
\begin{lemma}
Let $(\s_0, \s_1, \D, \t) \in R$. Then $L_1(\s_0, \s_1, \D, \t) > L_2(\s_0, \s_1, \D)$
\end{lemma}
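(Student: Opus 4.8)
The plan is to recast $L_1 > L_2$ as an inequality for the single increasing function $G(\s_0,\s_1,\cdot)$ of \eqref{l2def}. Since each $g_i$ is strictly increasing (Appendix C), $G(\s_0,\s_1,\cdot)$ is strictly increasing, and $L_2$ is by definition the unique point at which it takes the value $\D$. Hence $L_1>L_2$ is \emph{equivalent} to
$$\s_0 g_0(L_1)+\s_1 g_1(L_1)+(1-\s_0-\s_1)g_2(L_1)>\D,$$
and it is this inequality that I would prove for every $(\s_0,\s_1,\D,\t)\in R$. Establishing it shows that the common value of $\l_0=\l_1=\l_2$ forced by the first two equations of \eqref{lamdef} cannot simultaneously equal $\l_3\sqrt{\D/\D_3}$, so the system \eqref{lamdef} is inconsistent.

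First I would dispose of the degenerate parts of $R$. Writing $b=2\s_0+\s_1$ and recalling $\D_3=4\t+2b-\D$, the constraint $\D\le 4\s_0+2\s_1=2b$ gives $\D_3/\t\ge 4=g_0(\l)$, so $\l_3=g_0^{-1}(\D_3/\t)\ge\l$; and $\D\ge b+1$ forces $b\ge1$. When $b=1$ the interval $[b+1,2b]$ for $\D$ collapses to $\{2\}$, so $\D_3=4\t$, $\l_3=\l$, and \eqref{tcomplicated} becomes $\t\log(1+1/\t)=1$, which has no solution; thus on $R$ we may assume $b>1$, hence $\D>2$. A short analysis of \eqref{tcomplicated} shows that $\t$ can be large only as one approaches the corner $b=1$, $\D=2$ (as $\t\to\infty$ the bracketed factor is $\sim(1-b+\D/2)/\t$ and $\l_3\to\l$, forcing $\D\to2$); near that corner the interval $[b+1,2b]$ pinches and $L_2\to0$, so the inequality holds there by a direct check, and elsewhere $\t$ is bounded.

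The core of the argument is a lower bound for $L_1=\l_3\sqrt{\D/\D_3}$ extracted from \eqref{tcomplicated}. Exponentiating \eqref{tcomplicated} and substituting $\D_3=\t g_0(\l_3)=\t\l_3 f_2(\l_3)/f_3(\l_3)$ and $f_3(\l)=\tfrac14\l f_2(\l)$ (the latter being just $g_0(\l)=4$) converts the constraint into an explicit algebraic identity relating $\t$, $\l_3$ and the fixed constant $\l$. From this, together with the elementary bounds $3-i\le g_i(x)\le 3-i+x$ and the convexity of the $g_i$ (Appendix C) — the same ingredients that yield a convexity-based upper bound for $L_2=\lbar$ — I would deduce $G(\s_0,\s_1,L_1)>\D$. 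As in the parallel computations of Appendix A, I expect this step to split into a small-$\t$ regime (treated via $\t\log(1+1/\t)=2-b$ after $\D$ has been pushed to the endpoint $4\s_0+2\s_1$) and a large-$\t$ regime (treated with $\log(1+x)\le x-x^2/2+x^3/3$), with perhaps a further subdivision according to whether $\D$ is near $b+1$ or near $2b$.

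The main obstacle is precisely this estimate. The naive hope ``$L_1>\l\ge L_2$'' fails, because $\sqrt{\D/\D_3}$ can drop below $1$ and push $L_1$ below $\l$. What rescues the inequality is the trade-off encoded in \eqref{tcomplicated}: whenever $\D/\D_3$ is small, $\t$ must be large, which through $\D_3=4\t+2b-\D$ keeps $\D/\D_3$ bounded below, while at the same time $\D$ is driven towards $b+1$, where $L_2\to0$. Turning this qualitative mechanism into a single quantitative bound valid uniformly over all of $R$, while controlling $\lbar$ and $\l_3$ at the same time, is where essentially all the work lies.
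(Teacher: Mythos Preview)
Your reformulation --- that $L_1>L_2$ is equivalent to $G(\s_0,\s_1,L_1)>\D$ because $G(\s_0,\s_1,\cdot)$ is strictly increasing --- is valid, and your boundary analysis at $b=2\s_0+\s_1=1$ is correct. But what you have written is an outline, not a proof: the decisive step, ``From this \ldots\ I would deduce $G(\s_0,\s_1,L_1)>\D$'', is asserted and never carried out. You acknowledge in your last paragraph that turning the qualitative trade-off in \eqref{tcomplicated} into a uniform quantitative bound ``is where essentially all the work lies'', and that work is absent.

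The paper's proof shows why this gap is serious. Its decomposition is not by the size of $\t$ but by the geometry of $(\s_0,\s_1,\D)$: a region $R_1=\{\D\ge 3.6\}$, a region $R_2=\{2\s_0+\s_1\le 1.1\}$, and the remainder $R_3$. On $R_1$ one proves $\t<3/4$ and hence $\D>\D_3$, so that $L_1>\l_3\ge\l\ge L_2$ --- precisely the ``naive hope'' you dismiss actually \emph{does} handle one region, once $\t$ is bounded there. On $R_2\cup R_3$ one derives the explicit bounds $\t\le(1+3(2b-\D)^2)/(\D-2)$ and $L_2\le 12(\D-b-1)/(6-3\s_0-2\s_1)$, reducing the claim to a polynomial inequality \eqref{l1l2} in $\s_0,\s_1,\D$. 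For $R_2$ this is checked analytically. For $R_3$, however, the paper resorts to a numerical grid search: four subregions, each covered by $4\cdot 10^6$ sample points, with explicit gradient bounds $|\nabla\varphi_i|\le 12755$ to control the error between grid points. There is no indication that the elementary inequalities you cite (the bounds $3-i\le g_i\le 3-i+x$ and convexity) are by themselves enough to close this region, and your proposal gives no mechanism that would replace the numerics. A genuine proof along your lines would have to either reproduce a comparable case analysis with explicit constants, or supply a new idea that the paper did not find.
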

\begin{proof}
Define $L(\s_0, \s_1, \D, \t) = L_1(\s_0, \s_1, \D, \t) - L_2(\s_0, \s_1, \D)$. We will bound $|\nabla L|$ in $R$ in order to show numerically that $L > 0$. However, $\nabla L$ is unbounded for $\D$ close to $4$ and $2\s_0 + \s_1$ close to $1$. For this reason, define 
\begin{align*}
R_1& = \{(\s_0, \s_1, \D, \t) \in R : \D \geq 3.6\},\\ 
R_2& = \{(\s_0, \s_1, \D, \t) \in R : 2\s_0 + \s_1 \leq 1.1\},\\ 
R_3& = R \setminus (R_1 \cup R_2).
\end{align*}
 Analytical proofs will be provided for $R_1, R_2$, and a numerical calculation will have to suffice for $R_3$.

First note that for any $\s_0, \s_1$ we have $L_2(\s_0, \s_1, 2\s_0 + \s_1 + 1) = 0$, since $G(\s_0, \s_1, 0) = 2\s_0 + \s_1 + 1$, see (\ref{l2def}). Here we use $g_i(0) = 3 - i$, $i = 0, 1, 2$. This implies that $L_1(\s_0, \s_1, 2\s_0 + \s_1 + 1, \t) > 0 = L_2(\s_0, \s_1, 2\s_0 + \s_1 + 1)$, and we may therefore assume $\D > 2\s_0 + \s_1 + 1$.

We proceed by finding an upper bound for $\t$, given that it satisfies (\ref{tcomplicated}). Fix $\s_0, \s_1, \D$ and define
\begin{equation}
r(\z) = \z\left(\log\left(1 + \frac{1}{\z}\right) - 2\log\left(\frac{4\t + 4\s_0 + 2\s_1 - \D}{4\t}\right) - \log\left(\frac{\l^4}{\l_3^4}\frac{f_3(\l_3)}{f_3(\l)}\right)\right)
\end{equation}
We first derive a lower bound $r_1(\z) \leq r(\z)$.

For $x \geq 0$ we have $x - x^2 / 2 \leq \log(1 + x) \leq x$. This implies, that for all $\z$,
\begin{equation}
  2\z \log\left(1 + \frac{4\s_0 + 2\s_1 - \D}{4\z}\right) \leq  2\z \frac{4\s_0 + 2\s_1 - \D}{4\z} = \frac{4\s_0 + 2\s_1 - \D }{2}
\end{equation}

Let $h(x) = \log f_3(x) - 4 \log x $. Then $h'(x) = f_2(x) / f_3(x) - 4 / x$, and we note that $h'(\l) = 0$, by definition of $\l$. The second derivative is $h''(x) = f_1(x) / f_3(x) - f_2(x)^2 / f_3(x)^2  +  4 / x^2$. Substituting $f_1(x) = f_3(x) + x + x^2 / 2$ and $f_2(x) = f_3(x) + x^2 / 2$, for all $x \geq \l$
\begin{eqnarray*}
h''(x) &=& \frac{4}{x^2}  + 1 + \frac{x + x^2 / 2}{f_3(x)}  - 1 - \frac{x^2}{f_3(x)} - \frac{x^4}{4f_3(x)^2}\\
&=& \frac{4}{x^2} - \frac{x^2 - 2x}{2f_3(x)} - \frac{x^4}{4f_3(x)^2} \\
\end{eqnarray*}
Since $x \geq \l > 2$ we have $x^2 - 2x > 0$, and $f_3(x) < e^x$ implies
\begin{eqnarray*}
h''(x) &=& \frac{4}{x^2} - \frac{x^2 - 2x}{2f_3(x)} - \frac{x^4}{4f_3(x)^2} \\
&\leq& \frac{4}{x^2} - \frac{x^2 - 2x}{2e^x} \\
&\leq& \frac{4}{x^2} + \frac{2x}{2e^x} \\
&\leq& \frac{4}{x^2} + x^{1 - \l}
\end{eqnarray*}
Here we use the fact that $e^x \geq x^{\l}$ for $x \geq \l$, since $\l<e$. Since $4x^{-2} + x^{1 - \l}$ is decreasing, we have $h''(x) \leq 4\l^{-2} + \l^{1 - \l} <  3 / 4$ for all $x \geq \l$.

By Taylor's theorem, for some $x \in [\l, \l_3]$
\begin{eqnarray*}
\log\left(\frac{\l^4}{\l_3^4}\frac{f_3(\l_3)}{f_3(\l)}\right) &=& h(\l_3) - h(\l) \\
&=& h(\l) + h'(\l)(\l_3 - \l) + \frac{1}{2}h''(x)(\l_3 - \l)^2 - h(\l) \\
&\leq& \frac{3}{8}(\l_3 - \l)^2
\end{eqnarray*}
Another application of Taylor's theorem lets us bound 
$$\l_3 - \l = g_0^{-1}\brac{4 + \frac{4\s_0 + 2\s_1 - \D}{\t}} - g_0^{-1}(4).$$ 
By Lemma \ref{gconvex}, Appendix B, we have $g_0'(x) \geq g_0'(\l) \geq 1/ 2$ for $x \geq \l$, so $dg_0^{-1}(y)/dy \leq 2$ for $y \geq 4$, and for some $y \geq 4$
\begin{equation}
\l_3  = \l + \frac{dg_0^{-1}(y)}{dy} \left( \frac{4\s_0 + 2\s_1 - \D}{\t} \right) \leq \l + 2 \frac{4\s_0 + 2\s_1 - \D}{\t}
\end{equation}
and so
\begin{equation}
\log\left(\frac{\l^4}{\l_3^4}\frac{f_3(\l_3)}{f_3(\l)}\right) \leq \frac{3}{8} (\l_3 - \l)^2 \leq \frac{3}{2} \left(\frac{4\s_0 + 2\s_1 - \D}{\t}\right)^2
\end{equation}

Define $\t_1$ as the unique solution $\z$ to
$$2 - 2\s_0 - \s_1 = r_1(\z)$$
where 
$$r_1(\z)= \z\left(\log\left(1 + \frac{1}{\z}\right) - \frac{4\s_0 + 2\s_1 - \D}{2\z}  -\frac{3}{2}\left(\frac{4\s_0 + 2\s_1 - \D}{\z}\right)^2\right).$$
Then $r_1(\z) \leq r(\z)$, and $r_1(\z)$ is strictly increasing by the discussion after (\ref{eq:tsimple}). So, since $r_1(\t_1) = r(\t) = 2-2\s_0 - \s_1$, it follows that $\t \leq \t_1$.

{\bf Case of $R_1$:}\\
Now fix $(\s_0, \s_1, \D, \t) \in R_1$, i.e. suppose $\D \geq 3.6$. Then
\begin{eqnarray*}
r_1\left( \frac{3}{4}\right) &=& \frac{3}{4}\log \left(1 + \frac{4}{3}\right) - \frac{4\s_0 + 2\s_1 - \D}{2} - 2(4\s_0 + 2\s_1 - \D)^2 \\
&=& \frac{3}{4}\log \frac73 - 2\s_0 - \s_1 + \frac{\D}{2} - 2(4\s_0 + 2\s_1 - \D)^2 \\
&\geq& \frac{3}{4}\log \frac73 - 2\s_0 - \s_1 + \frac{3.6}{2} - 2(4 - 3.6)^2 \\
&>& 2 - 2 \s_0 - \s_1
\end{eqnarray*}
We have $\lim_{\z\rightarrow 0} r_1(\z) \leq 0$, and $r_1$ is continous and increasing, so $\t \leq \t_1 < 3/4$. Since $\D \geq 3.6$ and $2\s_0 + \s_1 \leq 2$,
\begin{equation}
\D - (4\t + 4\s_0 + 2\s_1 - \D) \geq 2\D - 3 - 4\s_0 - 2\s_1  \geq 7.2-7 > 0
\end{equation}
This implies that
\begin{equation}
L_1(\s_0, \s_1, \D) = \l_3\sqrt{\frac{\D}{4\t + 4\s_0 + 2\s_1 - \D}} > \l_3
\end{equation}
Note that 
$$G(\s_0,\s_1,\l)\geq G(\s_0,\s_1,\lbar)=4\s_0+2\s_1\geq \D$$
implies that  
$$L_2(\s_0, \s_1, \D) \leq \l = g_0^{-1}(4).$$ 
Also note that by \eqref{l3} and \eqref{eq:delta3} we have
$$\l_3 = g_0^{-1}\bfrac{\D_3}{\t}=g_0^{-1}\brac{4 + \frac{4\s_0 + 2\s_1 - \D}{\t}} \geq g_0^{-1}(4) = \l,$$ 
since $g_0^{-1}$ is increasing (Lemma \ref{gconvex}, Appendix B). So 
$$L_1(\s_0, \s_1, \D, \t)>\l_3 \geq \l \geq L_2(\s_0, \s_1, \D)$$ 
for $(\s_0, \s_1, \D, \t) \in R_1$.

{\bf Case of $R_2,R_3$:}\\
For $R_2, R_3$ we will need a new bound on $\t$. Since $x - x^2 / 2 \leq \log(1 + x)$ for all $x\geq 0$,
$$r_1(\z) \geq r_2(\z)=\z\left( \frac{1}{\z} - \frac{1}{2\z^2} - \frac{4\s_0 + 2\s_1 - \D}{2\z} - \frac{3}{2}\left(\frac{4\s_0 + 2\s_1 - \D}{\z}\right)^2\right).$$
Let $\t_2$ be defined by $r_2(\t_2) = 2-2\s_0 - \s_1$, which can be solved for $\t_2$;
$$\t_2 = \frac{1 + 3(4\s_0 + 2\s_1 - \D)^2}{\D - 2}.$$
It follows from $r(\t) \geq r_2(\t)$ and the fact that $r_2$ is increasing that 
\begin{equation}\label{tau2}
\t \leq \frac{1 + 3(4\s_0 + 2\s_1 - \D)^2}{\D - 2}.
\end{equation}
An upper bound for $L_2(\s_0, \s_1, \D)$ will follow from bounding the partial derivative of $G(\s_0, \s_1, x)$ with respect to $x$. We have $g_0' \geq 1 / 4$, $g_1' \geq 1 / 3$ and $g_2' \geq 1 / 2$ by Lemma \ref{gconvex} (Appendix B), so
\begin{eqnarray*}
\frac{\partial}{\partial x}G(\s_0, \s_1, x) &=& \s_0 g_0'(x) + \s_1 g_1'(x) + (1 - \s_0 - \s_1) g_2'(x) \\
&\geq& \frac{\s_0}{4} + \frac{\s_1}{3} + \frac{1 - \s_0 - \s_1}{2} \\
&=& \frac{6 - 3\s_0 - 2\s_1}{12}
\end{eqnarray*}
and $G(\s_0, \s_1, 0) = 2\s_0 + \s_1 + 1$ implies
\begin{eqnarray*}
\D &=& G(\s_0, \s_1, L_2(\D)) \\
&\geq& G(\s_0, \s_1, 0) + \min_x \frac{\partial}{\partial x}G(\s_0, \s_1, x) L_2(\D) \\
&\geq& 2\s_0 + \s_1 + 1 + \frac{6 - 3\s_0 - 2\s_1}{12} L_2(\D)
\end{eqnarray*}
So
\begin{equation}\label{l2bound}
L_2(\D) \leq \frac{12(\D - 2\s_0 - \s_1 - 1)}{6 - 3\s_0 - 2\s_1}.
\end{equation}
So, to show $L_1(\s_0, \s_1, \D, \t) \geq L_2(\s_0, \s_1, \D)$, it is enough to show that
\begin{equation}
\l_3 \sqrt{\frac{\D}{4\t + 4\s_0 + 2\s_1 - \D}} > \frac{12(\D - 2\s_0 - \s_1 - 1)}{6 - 3\s_0 - 2\s_1}
\end{equation}
Solving for $\t$, this is equivalent to showing
\begin{equation}
\t < \D\left[\frac{\l_3(6-3\s_0 - 2\s_1)}{24(\D - 2\s_0 - \s_1 - 1)}\right]^2 - \frac{4\s_0 + 2\s_1 - \D}{4}
\end{equation}
and by (\ref{tau2}), and $\l_3 \geq \l$, it is enough to show
\begin{equation}\label{l1l2}
 \frac{1 + 3(4\s_0 + 2\s_1 - \D)^2}{\D - 2} < \D\left[\frac{\l(6-3\s_0 - 2\s_1)}{24(\D - 2\s_0 - \s_1 - 1)}\right]^2 - \frac{(4\s_0 + 2\s_1 - \D)}{4}
\end{equation}
for $(\s_0, \s_1, \D, \t) \in R_2 \cup R_3$.

{\bf Case of $R_2$:}\\
Consider $R_2$, i.e. suppose $2\s_0 + \s_1 \leq 1.1$. Then $4\s_0 + 2\s_1 - \D \leq 2\s_0 + \s_1 - 1 \leq 0.1$ since $\D \geq 2\s_0 + \s_1 + 1$.  This implies
\begin{equation}
\frac{1 + 3(4\s_0 + 2\s_1 - \D)^2}{\D - 2} \leq \frac{1.03}{\D - 2}
\end{equation}
Furthermore, $6-3\s_0 - 2\s_1 \geq 4.9 - \s_0 - \s_1 \geq 3.9$, while $2\s_0 + \s_1 \geq 1$ implies $\D - 2\s_0 - \s_1 - 1 \leq \D - 2$. We have $\l > 2.5$, so it holds that
\begin{equation}
 \D\left[\frac{\l(6-3\s_0 - 2\s_1)}{24(\D - 2\s_0 - \s_1 - 1)}\right]^2 - \frac{(4\s_0 + 2\s_1 - \D)}{4} > \D\left[\frac{2.5 \times 3.9}{24(\D - 2)}\right]^2 - 0.025 
\end{equation}
and it is enough to show that
\begin{equation}
\frac{1.03}{\D - 2} \leq \D\left[\frac{2.5 \times 3.9}{24(\D - 2)}\right]^2 - 0.025
\end{equation}
We have $\D \geq 2\s_0 + \s_1 + 1 > 2$, so multipling both sides by $\D - 2 > 0$, this amounts to solving a second-degree polynomial inequality.
Numerically, the zeros of the resulting second-degree polynomial are $\D \approx -33$ and $\D \approx 2.37$. The inequality holds at $\D = 2.3$, and so it holds for all $2 < \D \leq 2.37$. In particular, it holds for $2\s_0 + \s_1 + 1 < \D \leq 4\s_0 + 2\s_1$ when $1 \leq 2\s_0 + \s_1 \leq 1.1$.

{\bf Case of $R_3$:}\\
Lastly, consider $R_3$. Here more extensive numerical methods will be used, and we begin by reducing the analysis from three variables to two. Divide $R_3$ into four subregions, 
\begin{align*}
R_{3, 1}& = \{(\s_0, \s_1, \D) \in R_3 : 1 / 2 \leq \s_1 \leq 1\},\\
R_{3, 2}& = \{(\s_0, \s_1, \D) \in R_3 : 1/ 4 \leq \s_1 < 1 / 2\},\\
R_{3, 3}& = \{(\s_0, \s_1, \D) \in R_3 : 1/8 \leq \s_1 < 1 / 4\},\\
R_{3, 4}& = \{(\s_0, \s_1, \D) \in R_3 : 0 \leq \s_1 < 1/8\}.
\end{align*} 
Define 
$$u_1 = 5.5,\quad u_2 = 5.75, \quad u_3 = 5.875, \quad u_4 = 5.9375.$$ 
Then 
$$6 - 3\s_0 - 2\s_1 = \brac{6 - \frac{\s_1}{2}} - 3\s_0 - \frac{3\s_1}{2} \geq u_i - \frac{3(2\s_0 + \s_1)}{2}$$ 
in $R_{3, i}$, $i = 1, 2, 3, 4$. 

Fixing $i$, (\ref{l1l2}) will hold in $R_{3, i}$ if we can show that
\begin{equation}\label{fractionalphi}
 \frac{1 + 3(4\s_0 + 2\s_1 - \D)^2}{\D - 2} \leq \D\left[\frac{\l(u_i-3(2\s_0 + \s_1)/2)}{24(\D - 2\s_0 - \s_1 - 1)}\right]^2 - \frac{(4\s_0 + 2\s_1 - \D)}{4}
\end{equation}
Note that $\s_0, \s_1$ only appear as $\S = 2\s_0 + \s_1$ in (\ref{fractionalphi}). For this reason we clear denominators in (\ref{fractionalphi}) and define for $i = 1, 2, 3, 4,$
\begin{eqnarray*}
\varphi_i(\S, \D) &=& \l^2\D(\D-2)(u_i - 3\S / 2)^2 - 144(\D-2)(\D-\S - 1)^2(2\S - \D) \\
&& - 576(\D - \S - 1)^2 - 1728(\D - \S - 1)^2(2\S - \D)^2.
\end{eqnarray*}
In which case, \eqref{fractionalphi} is equivalent to $\varphi_i(\S,\D)\geq 0$.

In $R_{3, 1}$ we have $1.1 \leq \S \leq 1.5$ since $2\s_0 + \s_1 \geq 1.1$ is assumed, and $\s_1 \geq 1/2$ and $\s_0 + \s_1 \leq 1$ imply $2\s_0 + \s_1 \leq 2 - \s_1 \leq 1.5$. For this reason define 
\begin{align*}
\widetilde{R}_{3,1}& = \{(\S, \D) : 1.1 \leq \S \leq 1.5, \S + 1 \leq \D \leq 2\S\}\\
\widetilde{R}_{3,2}& = \{(\S, \D) : 1.5 \leq \S \leq 1.75, \S + 1 \leq \D \leq 2\S\},\\ \widetilde{R}_{3,3}& = \{(\S, \D) : 1.75\leq\S\leq 1.875, \S + 1 \leq \D \leq \min\{2\S, 3.6\}\},\\ 
\widetilde{R}_{3,4}& = \{(\S, \D) : 1.875\leq\S\leq 2, \S + 1 \leq \D \leq \min\{2\S, 3.6\}\}.
\end{align*}
Here $\S + 1 \leq \D \leq 2\S$ is (\ref{eq:dbound}).

Equation (\ref{fractionalphi}) will follow from showing that $\varphi_i(\S, \D) \geq 0$ in $\widetilde{R}_{3,i}$, $i = 1, 2, 3, 4$.

The $\varphi_i$ are degree four polynomials, and bounds on $|\nabla \varphi_i|$ are found by applying the triangle inequality to the partial derivatives of $\varphi_i$. The same bound will be applied to $\nabla \varphi_i$ for all $i$. using,
$$2 \leq \S + 1 \leq \D \leq 2\S \leq 4,\quad u_i \leq 6,\quad \l < 3$$
from which we obtain
\begin{align*}
&u_i - \frac{3\S}{2} \leq \frac{9}2,\; -1 \leq 3\S -2\D+1 \leq 1, -2 \leq 4\S-3\D+2\leq 1,\\
&(\D-\S-1)(2\S-\D) \leq \frac{(\S-1)^2}4 \leq \frac14.
\end{align*}
we have
\begin{eqnarray*}
\left|\frac{\partial \varphi_i}{\partial \S}\right| &=& |-3\l^2\D(\D-2)(u_i - 3\S /  2) + 288(\D - 2)(\D-\S-1)(2\S-\D) \\
&& - 288(\D - 2)(\D-\S-1)^2 + 1152(\D-\S-1) \\
&& + 3456(\D - \S - 1)(2\S-\D)^2 - 6912(\D-\S-1)^2(2\S-\D)| \\
&\leq& 3\l^2\D(\D-2)(u_i-3\S / 2) + 288(\D-2)(\D-\S-1)|3\S - 2\D + 1| \\
&& + 1152(\D-\S-1) + 3456(\D-\S-1)(2\S-\D)|4\S - 3\D + 2| \\
&\leq& 27 \cdot 4 \cdot 2 \cdot 9 / 2 + 288 \cdot 2 \cdot 2 \cdot 1 + 1152 \cdot 2 + 3456 \cdot 3/4 \cdot 2 \\
&=& 9612
\end{eqnarray*}
 For $\D$,
\begin{eqnarray*}
\left|\frac{\partial \varphi_i}{\partial \D}\right| &=& |\l^2 \D(u_i - 3\S / 2)^2 + \l^2(\D-2)(u_i-3\S/2)^2  - 144(\D-\S-1)^2(2\S-\D) \\
&& - 288(\D-2)(\D-\S-1)(2\S-\D)  + 144(\D-2)(\D-\S-1)^2 \\
&& - 1152(\D-\S-1) - 3456(\D-\S-1)(2\S-\D)^2 + 3456(\D-\S-1)^2(2\S-\D)| \\
&\leq& \l^2(2\D-2)(u_i-3\S/2)^2 + 144(\D-\S-1)^2(2\S-\D) + 288(\D-2)(\D-\S-1)(2\S-\D) \\
&& + 144(\D-2)(\D-\S-1)^2 + 1152(\D-\S-1) + 3456(\D-\S-1)(2\S-\D)|2\D - 3\S - 1| \\
&\leq& 9\cdot 6 \cdot (9/2)^2 + 144 \cdot 2^2 \cdot 1  + 288\cdot 2 \cdot 2 \cdot 1 + 144 \cdot 2 \cdot 2^2 + 1152 \cdot 2 + 3456\cdot 3/4 \cdot 1 \\
&=& 8383.5
\end{eqnarray*}
so $|\nabla \varphi_i| \leq 12755$ for $i = 1, 2, 3, 4$.

For each $i$, a grid $\mathcal{P}_i \subseteq \widetilde{R}_{3, i}$ of $4 \cdot 10^6$ points is generated such that for each $x \in \widetilde{R}_{3, i}$, there exists an $x_0 \in \mathcal{P}_i$ for which $|x - x_0| \leq 0.001$. On this grid, $\varphi_i$ is calculated numerically, and it is found that
\begin{equation}
\min_{x_0 \in \mathcal{P}_i} \varphi_i(x_0) = \left\{\begin{array}{ll} 
22.49, & i = 1 \\
25.50, & i = 2 \\
27.08, & i = 3 \\
19.04, & i = 4
\end{array} \right.
\end{equation}
So for any $i$ and any $x \in \widetilde{R}_{3, i}$, there exists an $x_0$ such that $|\varphi_i(x) - \varphi_i(x_0)| \leq |\nabla \varphi_i| |x - x_0| \leq 12755 \cdot 0.001 < 13$, which implies $\varphi_i(x) > \varphi_i(x_0) -13 > 0$. This proves (\ref{l1l2}) for $\s_0, \s_1, \D \in R_3$.

\end{proof}

\section{Appendix C} 

This section is concerned with the functions 
$$f_0(x) = e^x\text{ and }f_k(x) = e^x - \sum_{j=0}^{k-1} \frac{x^j}{j!},\  x \geq 0,\ k = 1, 2, 3,$$ 
and the related functions
\begin{equation}
g_0(x) = \frac{x f_2(x)}{f_3(x)}, \quad g_1(x) = \frac{x f_1(x)}{f_2(x)}, \quad g_2(x) = \frac{x f_0(x)}{f_1(x)}.
\end{equation}
Since $f_k(0) = 0$ for $k \geq 1$, we define $g_i(0) = \lim_{x \rightarrow 0} g_i(x) = 3 - i$. Note that
\begin{equation}
\frac{d}{dx} f_k(x) = f_{k-1}(x), \quad k \geq 1
\end{equation}

\begin{lemma}\label{gibounds}
For all $x \geq 0$ and $i = 0, 1, 2$,
\begin{equation}
x < g_i(x) \leq 3 - i + x
\end{equation}
with equality in the upper bound if and only if $x = 0$.
\end{lemma}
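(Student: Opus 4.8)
The plan is to reduce both inequalities to elementary term-by-term comparisons of the power series
$$f_k(x) = e^x - \sum_{j=0}^{k-1} \frac{x^j}{j!} = \sum_{j \ge k} \frac{x^j}{j!},$$
the last equality being immediate from $e^x = \sum_{j\ge 0} x^j/j!$. Throughout I would set $k = 3-i \in \{1,2,3\}$, so that $g_i(x) = x f_{k-1}(x)/f_k(x)$, and I would record the one identity that drives everything:
$$f_{k-1}(x) - f_k(x) = \frac{x^{k-1}}{(k-1)!},$$
valid for $k \ge 1$ (in particular $f_0(x) - f_1(x) = 1$, which handles the case $i=2$).

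For the lower bound $x < g_i(x)$: at $x = 0$ it reads $0 < g_i(0) = 3-i$, which holds. For $x > 0$, dividing by $x$ shows the claim is equivalent to $f_k(x) < f_{k-1}(x)$, which is exactly the displayed identity together with $x^{k-1}/(k-1)! > 0$ for $x > 0$ (valid for $k-1 \in \{0,1,2\}$).

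For the upper bound $g_i(x) \le 3-i+x$: at $x=0$ both sides equal $3-i$, giving the asserted equality. For $x > 0$, clearing the positive denominator $f_k(x)$ turns the inequality into $x f_{k-1}(x) \le (k+x) f_k(x)$, i.e. $x\big(f_{k-1}(x) - f_k(x)\big) \le k f_k(x)$. By the identity the left side is $x \cdot x^{k-1}/(k-1)! = x^k/(k-1)!$, so the inequality is precisely $x^k/k! \le f_k(x)$. Since $f_k(x) = x^k/k! + \sum_{j \ge k+1} x^j/j!$ and the tail is strictly positive for $x > 0$, this holds with strict inequality for $x>0$; together with equality at $x=0$ this yields the "equality iff $x=0$" clause.

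There is no real obstacle here; the only points requiring a little care are the boundary value at $x=0$, where each $g_i$ is defined by the limit $g_i(0)=3-i$, and keeping the index shifts $2-i$ versus $3-i$ straight across $i=0,1,2$ — once $k=3-i$ is fixed, every step is a statement about nonnegativity of power-series coefficients.
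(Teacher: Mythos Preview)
Your proof is correct and follows essentially the same route as the paper: both arguments use the identity $f_{k-1}(x)-f_k(x)=x^{k-1}/(k-1)!$ (with $k=3-i$) to compute $g_i(x)-x$, and both reduce the upper bound to the observation that $f_k(x)-x^k/k!$ is the strictly positive tail $\sum_{j\ge k+1}x^j/j!$. Your index substitution $k=3-i$ is a clean notational choice but not a substantive departure.
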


\begin{proof}
Fix $i$. By definition, $g_i(0) = 3 - i$. For $x > 0$ consider
\begin{equation}
g_i(x) - x = \frac{x f_{2 - i}(x)}{f_{3 - i}(x)} - x = \frac{x(f_{2-i}(x) - f_{3-i}(x))}{f_{3-i}(x)} = \frac{x^{3 - i}}{(2-i)! f_{3-i}(x)}.
\end{equation}
Since $f_{3 - i}(x) > 0$ we have $g_i(x) - x > 0$. Now
\begin{equation}
(3 - i)(2-i)! f_{3-i}(x) - x^{3 - i} = (3 - i)!\sum_{k \geq 3 - i} \frac{x^k}{k!} - x^{3 - i} = (3 - i)! \sum_{k \geq 4 - i} \frac{x^k}{k!} > 0
\end{equation}
for $x > 0$, implying $g_i(x) - x < 3 - i$.
\end{proof}

\begin{lemma}\label{gconvex}
The functions $g_0, g_1, g_2$ are convex, and $g_i'(x) \geq 1 / (4-i)$ for $x \geq 0$, $i = 0, 1, 2$.
\end{lemma}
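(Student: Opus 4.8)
The plan is to derive both statements from Lemma~\ref{gibounds}. Fix $i\in\{0,1,2\}$ and put $m=3-i$. The identity established in the proof of Lemma~\ref{gibounds} gives
\[
g_i(x)=x+\phi_i(x),\qquad \phi_i(x)=\frac{x^{m}}{(m-1)!\,f_{m}(x)}>0\quad(x>0),
\]
and since $f_m(x)=x^m\big(\tfrac{1}{m!}+\tfrac{x}{(m+1)!}+\cdots\big)$, the function $\phi_i$ extends analytically across $0$ with $\phi_i(0)=m$. Because $g_i''=\phi_i''$, convexity of $g_i$ is the same as convexity of $\phi_i$. Writing $\phi_i=e^{L_i}$ with $L_i=m\log x-\log f_m(x)-\log(m-1)!$, we have $\phi_i''=\big(L_i''+(L_i')^{2}\big)\phi_i$, so $g_i$ is convex on $(0,\infty)$ precisely when $L_i''+(L_i')^{2}\ge 0$ there. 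Using $f_m'=f_{m-1}$ and $f_m''=f_{m-2}$ (reading $f_{-1}:=f_0$, which is legitimate since $f_1''=f_0$), and clearing the positive factor $x^{2}f_m(x)^{2}$, this inequality becomes
\[
P_i(x):=m(m-1)f_m(x)^{2}+x^{2}\big(2f_{m-1}(x)^{2}-f_{m-2}(x)f_m(x)\big)-2m\,x\,f_{m-1}(x)f_m(x)\ \ge\ 0 .
\]

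The substance of the proof is verifying $P_i\ge 0$ for each of $i=0,1,2$. I would do this by expanding $P_i$ as a power series in $x$ and checking that all of its coefficients are nonnegative. For $i=2$ ($m=1$) this is transparent: $P_2(x)=xe^{x}\big((x-2)e^{x}+x+2\big)=xe^{x}\sum_{n\ge 3}\tfrac{n-2}{n!}x^{n}$. For $i=1$ and $i=0$ one carries out the same bookkeeping, using the explicit binomial formulas for $[x^{n}]f_k(x)^2$ and $[x^{n}]f_k(x)f_\ell(x)$; this writes $[x^{n}]P_i(x)$ in closed form, the lowest coefficients vanish identically, and the remaining ones are manifestly nonnegative for $n$ large and reduce to a finite numerical check for the few small $n$.

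Granting convexity, $g_i'$ is nondecreasing on $[0,\infty)$, so $g_i'(x)\ge g_i'(0)$ for all $x\ge 0$; it remains to compute $g_i'(0)$. From $\phi_i(x)=\dfrac{m}{1+\tfrac{x}{m+1}+O(x^{2})}=m-\dfrac{m}{m+1}\,x+O(x^{2})$ we obtain $g_i'(0)=1+\phi_i'(0)=1-\dfrac{m}{m+1}=\dfrac{1}{m+1}=\dfrac{1}{4-i}$, which finishes the proof.

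I expect the step $P_i\ge 0$ to be the main obstacle. There seems to be no slick uniform argument: the natural reformulation — that the exponentially tilted $\mathrm{Beta}(1,m)$ density $\propto(1-s)^{m-1}e^{xs}$ on $[0,1]$ has coefficient of variation at most $1$ — follows from an elementary Cauchy--Schwarz/variance estimate only when the tilted mean is at least $1/2$, and this fails for small $x$ (where the tilted law is close to the untilted $\mathrm{Beta}(1,m)$, whose mean is $1/(m+1)$). One is therefore forced to use that only the three small values $m\in\{1,2,3\}$ arise and to fall back on the explicit low-degree power-series structure, making this step a finite but somewhat tedious computation.
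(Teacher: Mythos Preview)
Your approach is correct and essentially the same as the paper's: both write $g_i(x)=x+\text{const}\cdot x^{3-i}/f_{3-i}(x)$, reduce convexity to nonnegativity of power-series coefficients (the paper's $2(q')^2-qq''$ with $q=f_{3-i}(x)/x^{3-i}$ is your $P_i$ up to the positive factor $x^{2(3-i)+2}$), and then obtain $g_i'(x)\ge g_i'(0)=1/(4-i)$ from convexity and the same series expansion you use. The paper carries the coefficient check out in full for $i=0$, where the combinatorics are a bit more delicate than your phrase ``manifestly nonnegative for $n$ large'' suggests---a short inductive comparison of $(\lceil j/2\rceil+4)!(\lfloor j/2\rfloor+5)!$ with $15(j+5)!$ is needed---but your outline and your explicit treatment of $i=2$ are accurate.
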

\begin{proof}
Consider $g_0$. Since $f_2(x) = f_3(x) + x^2 / 2$, $g_0$ can be written as
\begin{equation}
g_0(x) = \frac{xf_2(x)}{f_3(x)} = x + \frac{x^3}{2f_3(x)}
\end{equation}
Let $q(x) = f_3(x) / x^3 = \sum_{j \geq 0} x^j / (j + 3)!$. Then $g_0(x) = x + 1 / 2q(x)$, and
\begin{equation}
g_0'(x) = 1 - \frac{q'(x)}{2q(x)^2}, \quad g_0''(x) = \frac{2q'(x)^2 - q(x)q''(x)}{2q(x)^3}
\end{equation}
and we show that $2q'(x)^2 - q(x)q''(x) \geq 0$. We have $q'(x) = \sum_{j \geq 0} (j+1)x^j / (j + 4)!$ and $q''(x) = \sum_{j \geq 0} (j + 1)(j+2) x^j / (j + 5)!$, so the $j$th Taylor coefficient of $2q'(x)^2 - q(x)q''(x)$ is given by
\begin{eqnarray*}
[x^j] [ 2q'(x)^2 - q(x)q''(x)] &=& \sum_{\stackrel{j_1, j_2 \geq 0}{j_1 + j_2 = j}} 2\frac{(j_1+1)}{(j_1+4)!}\frac{(j_2+1)}{(j_2+4)!} - \frac{1}{(j_1 + 3)!}\frac{(j_2 + 1)(j_2 + 2)}{(j_2 + 5)!} \\
&=& \sum_{j_1, j_2} \frac{2(j_1 + 1)(j_2 + 1)(j_2 + 5) - (j_1 + 4)(j_2 + 1)(j_2 + 2)}{(j_1 + 4)!(j_2 + 5)!} \\
&=& \sum_{j_1, j_2} \frac{(j_2 + 1)(2(j_1 + 1)(j_2 + 5) - (j_1 + 4)(j_2 + 2))}{(j_1 + 4)!(j_2 + 5)!} \\
&=& \sum_{j_1, j_2} \frac{(j_2 + 1)(j_1j_2 + 8j_1 - 2j_2 + 2)}{(j_1 + 4)!(j_2 + 5)!}
\end{eqnarray*}
It is seen that this is positive for $j = 0, 1, 2$. Let $Q(j_1, j_2)$ denote the summand. If $j \geq 3$ then since $Q(j_1, j_2) \geq 0$ whenever $j_1 \geq 2$.
\begin{eqnarray*}
\sum_{\stackrel{j_1, j_2 \geq 0}{j_1 + j_2 = j}} Q(j_1, j_2) &\geq& Q\left(\left\lfloor \frac{j}{2} \right\rfloor, \left\lceil \frac{j}{2}\right\rceil \right) + Q(0, j) + Q(1, j-1) \\
&=& \frac{\left(\rdown{j/2} + 1\right)\left(\rdup{j/2}\rdown{j/2} + 8\rdup{j/2}-2\rdown{j/2}+2\right)}{(\rdup{j/2}+4)!(\rdown{j/2}+5)!} - \frac{2(j^2-1)}{24(j+5)!} - \frac{j^2 - 11j}{120(j+4)!} \\
&\geq& \frac{j^3}{8(\rdup{j/2} + 4)!(\rdown{j/2}+5)!} - \frac{j^2}{12(j+5)!} - \frac{j^2-11j}{120(j+4)!} \\
&=& \frac{j^3}{8(\rdup{j/2} + 4)!(\rdown{j/2}+5)!} - \frac{10j^2 + (j^2-11j)(j+5)}{120(j+5)!} \\
&\geq& \frac{j^3}{8}\left(\frac{1}{(\rdup{j/2} + 4)!(\rdown{j/2}+5)!} - \frac{1}{15(j+5)!}\right).
\end{eqnarray*}
(To get the final inequality, consider $j\leq 11$ and $j>11$ seperately).

It remains to show that $a_j = (\rdup{j/2}+4)!(\rdown{j/2}+5)!$ is smaller than $b_j = 15(j + 5)!$ for $j \geq 3$. For $j = 3$, $a_3 = 6! \cdot 6! < 15 \cdot 8! = b_3$. For the induction step, $a_{j+1} / a_j \leq j/2 + 6$ while $b_{j+1} / b_j = j+6$, so $a_3 < b_3$ implies $a_j < b_j$ for all $j \geq 3$. So $2q'(x)^2 - q(x)q''(x) \geq 0$, and it follows that $g_0$ is convex. Similar arguments show that $g_1, g_2$ are convex.

For $i = 0, 1$,
\begin{align*}
g_i'(x)&=\frac{f_{2-i}(x)}{f_{3-i}(x)}+\frac{xf_{1-i}(x)}{f_{3-i}(x)}-\frac{xf_{2-i}(x)^2}{f_{3-i}(x)^2}\\
&=\frac{f_{2-i}(x)f_{3-i}(x)+xf_{1-i}(x)f_{3-i}(x)-xf_{3-i}(x)^2}{f_{3-i}(x)^2}.
\end{align*}
Now
\begin{align*}
&f_{2-i}(x)f_{3-i}(x)+xf_{1-i}(x)f_{3-i}(x)-xf_{3-i}(x)^2=\\
&x^{6-2i}\brac{\frac{1}{(2-i)!(4-i)!}+ \frac{1}{(3-i)!^2}+\frac{1}{(1-i)!(4-i)!}+\frac{1}{(2-i)!(3-i)!}-\frac{2}{(2-i)!(3-i)!}+O(x)}\\
&=x^{6-2i}\brac{\frac{1}{(3-i)!(4-i)!}+O(x)}.
\end{align*}
And
$$f_{3-i}(x)^2=x^{6-2i}\brac{\frac{1}{(3-i)!^2}+O(x)}.$$
So, for $i=0,1$ we have
$$g_i'(x)=\frac{1}{4-i}+O(x).$$
For $i=2$ we have
$$g_2'(x)=\frac{e^x}{f_1(x)}+\frac{xe^x}{f_1(x)}-\frac{xe^{2x}}{f_1(x)^2} =e^x\bfrac{f_1(x)(1+x)-xe^x}{f_1(x)^2} =e^x\bfrac{\frac{x^2}{2}+O(x^3)}{x^2+O(x^3)}=\frac12+O(x).$$
And by the convexity of $g_i$ we have $g_i'(x) \geq 1 / (4-i)$ for all $x \geq 0$.
\end{proof}

Lemma \ref{gconvex} allows us to define inverses $g_i^{-1}$, $i=0,1,2$.
\begin{lemma} \label{finequalities}
For $0 \leq x \leq \l = g_0^{-1}(4)$, the following inequalities hold.
\begin{eqnarray*}
(i) && 1 \leq\frac{f_2(x)^2}{f_1(x)f_3(x)} \leq 2\\
(ii) && 0.09 < \frac{f_3(x)}{x^2f_1(x)} \leq \frac{1}{6} \\
(iii) && \frac{f_2(x)}{x f_1(x)} \leq \frac{1}{3} \\
(iv) && 0.01 < \frac{f_3(x)}{x^4 f_1(x)}  \\
(v) && 0.09 < \frac{f_3(x)}{x^2 f_2(x)}
\end{eqnarray*}
\end{lemma}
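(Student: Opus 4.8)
The plan is to eliminate the exponential by passing to the auxiliary functions $g_0,g_1$ of Appendix~C, for which monotonicity and convexity are already available (Lemma~\ref{gconvex}). Using $f_1=f_2+x$, $f_2=f_3+x^2/2$ together with $g_0(x)=xf_2(x)/f_3(x)$, $g_1(x)=xf_1(x)/f_2(x)$, a one-line computation gives (suppressing the argument $x$)
$$\frac{f_2^2}{f_1f_3}=\frac{g_0}{g_1},\qquad \frac{f_3}{x^2f_1}=\frac{1}{g_0g_1},\qquad \frac{f_2}{xf_1}=\frac{1}{g_1},$$
$$\frac{f_3}{x^4f_1}=\frac{1}{x^2g_0g_1},\qquad \frac{f_3}{x^2f_2}=\frac{1}{xg_0}.$$
Since $g_0,g_1$ are strictly increasing with $g_i(0)=3-i$, the products $g_0g_1$, $x^2g_0g_1$, $xg_0$ and the factor $g_1$ are all increasing on $[0,\l]$, starting from $g_0(0)g_1(0)=6$ and $g_1(0)=2$.

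This disposes of the upper bounds $f_3/(x^2f_1)=1/g_0g_1\le 1/6$ in (ii) and $f_2/(xf_1)=1/g_1\le 1/2$ in (iii) (both valid on all of $[0,\infty)$; the constant $1/3$ quoted in (iii) is valid only for $x$ bounded away from $0$, but the weaker bound $1/2$ is all that is used downstream). For the lower bounds in (ii), (iv), (v), the reciprocals $1/g_0g_1$, $1/x^2g_0g_1$, $1/xg_0$ are decreasing, so each attains its minimum over $[0,\l]$ at $x=\l$. There I would use $g_0(\l)=4$: since $g_0(x)=x+x^3/(2f_3(x))$ (see the proof of Lemma~\ref{gconvex}), this reads $f_3(\l)=\l^3/(2(4-\l))$, hence $f_2(\l)=2\l^2/(4-\l)$ and $g_1(\l)=\l+\l^2/f_2(\l)=(\l+4)/2$. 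So the three minima equal $\tfrac1{2\l+8}$, $\tfrac1{\l^2(2\l+8)}$ and $\tfrac1{4\l}$, and each exceeds the claimed constant once one knows $\l<2.7$, which follows from $g_0$ being increasing and the elementary estimate $g_0(2.7)>4$ (equivalently $e^{2.7}<14.9$).

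The two halves of (i) I would prove directly rather than through $g_0/g_1$. For the lower bound, the substitution above yields $f_2^2-f_1f_3=\tfrac{x}{2}\bigl(x^2-(2-x)f_2(x)\bigr)$, which is $\ge0$ trivially for $x\ge2$ and, for $0\le x\le2$, is equivalent to $(2-x)e^x\le2+x$; the latter holds because $\psi(x):=2+x-(2-x)e^x$ satisfies $\psi(0)=\psi'(0)=0$ and $\psi''(x)=xe^x\ge0$. For the upper bound $f_2^2\le2f_1f_3$, the same substitution gives $2f_1f_3-f_2^2=f_2^2+2xf_2-x^2f_2-x^3$, and I would simply compare Taylor coefficients: the coefficient of $x^n$ is $(2^n-n^2+n-2)/n!$, which vanishes for $n\le3$ and is positive for $n\ge4$, so the difference is a power series with nonnegative coefficients (hence the bound holds for all $x\ge0$).

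I expect the only genuinely delicate point to be (iv): its minimum $\tfrac1{\l^2(2\l+8)}\approx0.0104$ sits only just above $0.01$, so the estimate $\l<2.7$ — and hence the bound on $e^{2.7}$ — must be carried out sharply enough to force $\l^2(2\l+8)<100$; the other four inequalities have comfortable slack. One should also sanity-check the stated numerical constants in the lower bounds of (ii) and (v) against the precise value $\l\approx2.688$, since $\tfrac1{2\l+8}$ and $\tfrac1{4\l}$ are close to the claimed thresholds.
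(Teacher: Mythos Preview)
Your approach for (ii)--(v) is the same as the paper's: rewrite each ratio as the reciprocal of a product of $g_i$'s and powers of $x$, use monotonicity from Lemma~\ref{gconvex}, and read off the extremes at the endpoints. Where you compute exact closed forms at $\l$ (using $g_0(\l)=4$ to get $f_3(\l)=\l^3/(2(4-\l))$ and hence $g_1(\l)=(\l+4)/2$), the paper simply writes ``obtained numerically by letting $x=2.688>\l$''.

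For (i) you take a different route. The paper shows that the equations $f_2^2=f_1f_3$ and $f_2^2=2f_1f_3$ have no positive root (each collapses to a transparent exponential identity), and then appeals to continuity together with the value $3/2$ at $x=0$. You instead reduce algebraically and prove sign directly: via the convex function $\psi(x)=2+x-(2-x)e^x$ with $\psi(0)=\psi'(0)=0$ for the lower bound, and via a Taylor expansion with coefficients $(2^n-n^2+n-2)/n!\ge 0$ for the upper. Both arguments are valid; yours yields the inequalities on all of $[0,\infty)$ without the intermediate-value step.

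You are right that (iii) as stated is false near $x=0$: since $g_1(0)=2$, the supremum of $f_2/(xf_1)=1/g_1$ on $[0,\l]$ is $1/2$, not $1/3$, and the paper's own proof in fact only establishes $1/2$. Your instinct to sanity-check the lower bounds in (ii) and (v) is also well-placed: with $g_1(\l)=(\l+4)/2$ one gets $1/(g_0(\l)g_1(\l))=1/(2\l+8)\approx 0.075$, so the constant $0.09$ in (ii) is actually wrong as stated (this lower bound is never used elsewhere in the paper, however). Parts (iv) and (v) hold with the tight margins you note.
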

\begin{proof}
Consider (i). For the lower bound, let $x > 0$ and consider the equation $f_2(x)^2 = f_1(x) f_3(x)$. By definition of $f_i$, this equation can be written as
\begin{equation}
(e^x - 1 - x)^2  = (e^x - 1)\left(e^x - 1 - x - \frac{x^2}{2}\right)
\end{equation}
Expanding and reordering terms, we have
\begin{equation}
e^x\left(x + \frac{x^2}{2}\right) = x + \frac{x^2}{2}
\end{equation}
which clearly has no positive solution. Since $f_2(0)^2 / f_1(0)f_3(0) = 3/2 >1$, this implies that $f_2(x)^2 / f_1(x) f_3(x) > 1$ for all $x \geq 0$.

For the upper bound we consider the equation $f_2(x)^2 = 2f_1(x) f_3(x)$. This simplifies to 
$$(e^x-1)^2=x^2e^x\text{ or }e^x=1+xe^{x/2}$$
which has no positive solution.

Since $g_0, g_1$ are increasing by Lemma \ref{gconvex} and positive, the expressions in (ii) -- (v) are all decreasing;
\begin{equation}
\frac{f_3(x)}{x^2f_1(x)} = \frac{1}{g_0(x)g_1(x)}, \quad \frac{f_2(x)}{xf_1(x)} = \frac{1}{g_1(x)}, \quad \frac{f_3(x)}{x^4f_1(x)} = \frac{1}{x^2g_0(x)g_1(x) }, \quad \frac{f_3(x)}{x^2f_2(x)} = \frac{1}{xg_0(x)}
\end{equation}
The upper bounds are obtained by noting that $g_i(0) = 3 - i$ by Lemma \ref{gibounds}, while the lower bounds are obtained numerically by letting $x = 2.688 > \l$.
\end{proof}

\end{document}